\documentclass[11pt]{article}

\usepackage[latin1]{inputenc}

\usepackage{amsfonts,amsmath,amssymb,amsthm,graphicx,epsfig,float}
\usepackage[T1]{fontenc}

\usepackage[english,francais]{babel}

\setlength{\paperwidth}{21cm}
\setlength{\paperheight}{29.7cm}
\setlength{\evensidemargin}{0.5cm}
\setlength{\oddsidemargin}{0.5cm}
\setlength{\topmargin}{-1cm}
\setlength{\headsep}{1.5cm}
\setlength{\headheight}{1.5cm}
\setlength{\textheight}{20cm}
\setlength{\textwidth}{15cm}

{
  \newtheorem{theoreme}{Th\'eor\`eme}
  \newtheorem*{theoreme*}{Th\'eor\`eme}
  \newtheorem{lemme}[theoreme]{Lemme}
  \newtheorem{corollaire}[theoreme]{Corollaire}

  \newtheorem{proposition}[theoreme]{Proposition}
\newtheorem*{corollaire*}{Corollaire}
\newtheorem*{proposition*}{Proposition}
\theoremstyle{remark}
  \newtheorem*{remarque*}{Remarque}
}

\newcounter{ex}

\newenvironment{rem*}{
  \noindent\textbf{Remarque. }}{}



\newcommand{\Cc}{\mathbb{C}}
\newcommand{\Nn}{\mathbb{N}}

\newcommand{\Pp}{\mathbb{P}}

\title{{\bf Endomorphismes aléatoires dans les espaces projectifs I}}
\author{Henry de Thélin}
\date{}

\begin{document}
\maketitle


\def\figurename{{Fig.}}%
\def\proofname{Preuve}
\def\contentsname{Sommaire}%

\begin{abstract}

Nous étudions des suites d'endomorphismes aléatoires de $\Pp^k(\Cc)$. Sous certaines hypothèses, nous construisons un courant de Green et une mesure de Green aléatoires. Nous montrons que ces mesures de Green vérifient des propriétés de mélange.

\end{abstract}

\selectlanguage{english}
\begin{center}
{\bf{ }}
\end{center}

\begin{abstract}

We study random holomorphic endomorphisms of $\Pp^k(\Cc)$. Under some assumptions, we construct a random Green current and a random Green measure and we prove that these measures have mixing properties.

\end{abstract}

\selectlanguage{francais}

Mots-clefs: dynamique complexe, courants.

Classification: 32U40, 32H50.

\section*{{\bf Introduction}}
\par

A partir d'un endomorphisme holomorphe de $\Pp^k(\Cc)$, $f$, de degré
$d \geq 2$, J.E. Forn{\ae}ss et N. Sibony ont défini le courant de
Green $T$ associé à $f$ (voir \cite{FS} et \cite{FS1}), dont le
support est l'ensemble de Julia de $f$. Si $\omega$ désigne la forme de Fubini-Study de $\Pp^k(\Cc)$, le courant de Green est obtenu comme limite au sens des courants de la suite $\frac{(f^n)^* \omega}{d^n}$.

Ce courant possède un potentiel continu: on peut donc définir son auto-intersection $\mu= T^k$ (voir \cite{FS}). La mesure $\mu$ ainsi obtenue est l'unique
mesure d'entropie maximale $k \log(d)$ (voir \cite{BD2}) et elle a ses
exposants de Lyapunov minorés par $\frac{\log(d)}{2}$ (voir
\cite{BD1}). Par ailleurs, $\mu$ est la limite de la suite de probabilités $\frac{(f^n)^* \omega^k}{d^{kn}}$.

Les convergences des suites $\frac{(f^n)^* \omega}{d^n}$ et $\frac{(f^n)^* \omega^k}{d^{kn}}$ ont été généralisées dans plusieurs directions. L'une d'entre elle consiste à remplacer $f^n$ par $f_n \circ \dots \circ f_0$ où les $f_n$ sont soit des endomorphismes holomorphes aléatoires proches d'un endomorphisme holomorphe $f$ (voir \cite{FS2} et \cite{FW}), soit, dans le cas des mesures, des applications méromorphes qui vérifient certaines propriétés (voir \cite{DS2}). Les résultats de ce papier vont dans cette direction aussi. Précisons tout d'abord le contexte.

L'ensemble des applications rationnelles de degré $d$ de $\Pp^k(\Cc)$ forme un espace projectif $\Pp^N(\Cc)$ où $N=(k+1) \frac{(d+k)!}{d!k!} -1$. Dans cet espace $\Pp^N(\Cc)$, on notera $\mathcal{H}_d$ les points qui correspondent à des endomorphismes holomorphes de degré $d$ de $\Pp^k(\Cc)$ et $\mathcal{M}$ le complémentaire de $\mathcal{H}_d$ dans $\Pp^N(\Cc)$.

 Considérons $F$ une application mesurable de $\Pp^N(\Cc)$ dans $\Pp^N(\Cc)$ et $\Lambda$ une mesure ergodique et invariante par $F$ (par exemple $F$ un endomorphisme holomorphe de $\Pp^N(\Cc)$ et $\Lambda$ sa mesure de Green).

Si $f_0$ est un point de $\Pp^N(\Cc)$ (que l'on prendra générique pour $\Lambda$), on peut considérer la suite $f_n=F^n(f_0)$. Cela donne une suite d'applications rationnelles et on va donner des résultats sur la suite d'itérées $f_n \circ \dots \circ f_0$. L'avantage ici c'est que les $f_n$ suivent une certaine loi qui dépend de $\Lambda$. On parlera de suite d'applications rationnelles aléatoires.

Un des objectifs de cet article sera de produire dans ce contexte un courant de Green et une mesure de Green aléatoires. 

Pour $f_0 \in \Pp^N(\Cc)$ un endomorphisme holomorphe de $\Pp^k(\Cc)$, on notera $f_i= F^{i} (f_0)$ ($i \in \Nn$) et $F_n$ la composée $F_n= f_n \circ \cdots \circ f_0$ (pour $n \in \Nn$). On a 

\begin{theoreme}{\label{theoreme1}}

On suppose que 

$$\int \log dist(f, \mathcal{M}) d \Lambda(f) > - \infty.$$

Alors il existe un ensemble $A$ de mesure pleine pour $\Lambda$ tel que pour tout endomorphisme holomorphe $f_0$ de $\Pp^k(\Cc)$ avec $f_0 \in A$, on a $\frac{F_n^* \omega}{d^{n+1}}$ qui converge vers un courant $T(f_0)$. Ce courant sera appelé courant de Green aléatoire (associé à $f_0$).

\end{theoreme}

Le courant de Green aléatoire ci-dessus sera à potentiel continu: on pourra donc définir son auto-intersection $T(f_0)^l$ pour $l$ compris entre $1$ et $k$. Pour $l=k$, nous appellerons $\mu(f_0)=T(f_0)^k$ mesure de Green aléatoire (associée à $f_0$). On montrera que pour $f_0 \in A$, on a $\frac{F_n^* \omega^l}{d^{l(n+1)}} $ qui converge vers $T(f_0)^l$ quand $n$ tend vers l'infini.

L'ensemble $A$ ci-dessus vérifiera $F(A) \subset A$. En particulier, dès que $f_0$ est dans $A$ nous définirons  les courants $T(f_i)^l$ pour $i \in \Nn$. Ces courants ont des propriétés d'invariance: on a $d^{-l} f_i^* T(f_{i+1})^l=T(f_i)^l$ et $(f_i)_* T(f_i)^l= d^{k-l} T(f_{i+1})^l$. Grâce à ces invariances, en suivant des idées de \cite{DNS}, nous obtiendrons un théorème de mélange aléatoire: 

\begin{theoreme}

On considère une suite $(f_n)_{n \in \Nn}$ d'endomorphismes holomorphes de degrés $d \geq 2$ et une suite de probabilités $(\mu(f_n))_{n \in \Nn}$ telle que pour tout $n \in \Nn$ on ait $f_n^*(\mu(f_{n+1}))=d^k \mu(f_{n})$ et $\mu(f_n)=(\omega + dd^c g_n)^k$ avec $g_n$ des fonctions continues.

Alors pour $\varphi \in L^{\infty}(\Pp^k)$ et $\psi \in DSH(\Pp^k)$, on a

$$| \langle \mu(f_0), (f_{n-1} \circ \cdots \circ f_0)^* \varphi \psi \rangle - \langle \mu(f_n) , \varphi \rangle \langle \mu(f_0), \psi \rangle | \leq C d^{-n} (1+ \| g_n \|_{\infty}  )^2 \|  \varphi \|_{\infty} \| \psi \|_{\mbox{DSH}}.$$

Ici $C$ est une constante qui ne dépend que de $\Pp^k$.

\end{theoreme}

Les endomorphismes aléatoires peuvent aussi se voir d'une autre façon: en utilisant un produit semi-direct. Soit $X= \Pp^N(\Cc) \times \Pp^k(\Cc)$ et $\tau:X \longrightarrow X$ définie par $\tau(f,x)=(F(f), f(x))$. 

A partir des $\mu(f)$ construits précédemment, on peut définir une mesure $\alpha$ sur $X$ par

$$\alpha(B):= \int \mu(f)(B \cap \{f\} \times \Pp^k(\Cc)) d \Lambda(f)$$

où on identifie $\{f\} \times \Pp^k(\Cc)$ avec $\Pp^k(\Cc)$.

Nous verrons dans l'article suivant (voir \cite{Det}) que sous la même hypothèse que celle du premier théorème, la mesure $\alpha$ est bien définie et est invariante par $\tau$. Cette mesure sera ergodique et mélangeante quand $\Lambda$ le sera. Cela proviendra du théorème de mélange précédent. Nous montrerons que son entropie mixée introduite par L. M. Abramov et V. A. Rohlin (voir \cite{AR} et \cite{LW}) est maximale et vaut $k \log d$. Cette entropie mixée est une entropie métrique aléatoire pour les mesures $\mu(f)$. Nous calculerons toujours dans \cite{Det} l'entropie topologique de suites d'endomorphismes aléatoires et nous montrerons un théorème d'hyperbolicité pour les mesures $\mu(f)$ comme dans \cite{Det1}.

Voici le plan de cet article. Dans un premier paragraphe, nous faisons des rappels et nous donnons des propriétés sur les fonctions dsh et les potentiels de courants. Dans le second, nous construirons le courant et la mesure de Green aléatoires. Le troisième paragraphe sera consacré à un théorème de continuité des courants de Green aléatoires lorsque l'on fait varier l'endomorphisme de départ. Dans la dernière partie, nous donnerons le théorème de mélange aléatoire.

\section{{\bf Potentiels, fonctions dsh:}}

Dans ce paragraphe, nous allons donner des propriétés sur les potentiels et les fonctions dsh qui nous seront utiles pour démontrer les théorèmes.

\subsection{{\bf Potentiels:}}

Soit $f$ un endomorphisme holomorphe de degré $d \geq 2$ de $\Pp^k(\Cc)$ et $\omega$ la forme de Fubini-Study de $\Pp^k(\Cc)$.

L'action de $f^*$ sur $H^{1,1}(\Pp^k(\Cc))$ est la multiplication par $d$, c'est-à-dire, via le $dd^c$-lemma, qu'il existe une fonction $u_f$ telle que

$$\frac{f^* \omega}{d} = \omega + dd^c u_f.$$

Il s'agit ici de donner quelques propriétés sur cette fonction $u_f$. 

On notera $\mathcal{H}_d$ l'ensemble des endomorphismes holomorphes de degré $d$ de $\Pp^k(\Cc)$. On peut identifier $\mathcal{H}_d$ avec un ouvert de Zariski de $\Pp^N(\Cc)$ où $N=(k+1) \frac{(d+k)!}{d!k!} -1$. Le complémentaire de $\mathcal{H}_d$ dans $\Pp^N(\Cc)$ sera noté $\mathcal{M}$.

\begin{proposition}{\label{prop2}}

Il existe des constantes $C, p $ telles que pour tout $f \in \mathcal{H}_d$, on ait une fonction $u_f$ avec $\frac{f^* \omega}{d} = \omega + dd^c u_f$, $u_f \leq 0$ et

$$ \| u_f \|_{C^1} \leq C dist(f, \mathcal{M})^{-p}$$

où la distance ici est celle de Fubini-Study dans $\Pp^N(\Cc)$.

\end{proposition}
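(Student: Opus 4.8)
The plan is to produce $u_f$ by an explicit formula coming from the homogeneous lift of $f$, and then to reduce the $C^1$ estimate to a quantitative lower bound on the norm of that lift. Fix a representative of $f\in\Hcal_d$ by a normalized homogeneous lift $F=(F_0,\dots,F_k):\Cc^{k+1}\to\Cc^{k+1}$, each $F_j$ homogeneous of degree $d$, the vector of coefficients of $F$ being a unit vector (this is exactly the chart realizing the identification with $\Pp^N(\Cc)$). Writing $\pi:\Cc^{k+1}\setminus\{0\}\to\Pp^k(\Cc)$ for the canonical projection and using $\pi^*\omega=dd^c\log\|z\|$ together with $\pi\circ F=f\circ\pi$, one computes
$$\pi^*\Big(\frac{f^*\omega}{d}-\omega\Big)=\frac1d\,dd^c\log\frac{\|F(z)\|}{\|z\|^d}.$$
The function $\log(\|F(z)\|/\|z\|^d)$ is invariant under $z\mapsto\lambda z$, hence descends to $\Pp^k(\Cc)$; since its difference with $u_f\circ\pi$ is then pluriharmonic on the compact manifold $\Pp^k(\Cc)$, it is constant, and I may set
$$u_f=\frac1d\log\frac{\|F\|}{\|z\|^d}+c_f.$$
Because the coefficients of $F$ are normalized, on the unit sphere $S=\{\|z\|=1\}$ one has $\|F(z)\|\le C_0$ with $C_0=C_0(k,d)$ independent of $f$; choosing the \emph{uniform} constant $c_f=-\tfrac1d\log C_0$ forces $u_f\le 0$ for every $f$.

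It then remains to estimate $\|u_f\|_{C^1}$. Since the relevant quantity is $0$-homogeneous, I would work on $S$ with the round metric (everything being $S^1$-invariant descends to $\Pp^k(\Cc)$ with comparable norms), where $u_f=\tfrac1d\log\|F(z)\|+c_f$ and is smooth as long as $F$ does not vanish on $S$. Put $h(f)=\min_{z\in S}\|F(z)\|$, which is strictly positive precisely because $f\in\Hcal_d$. The coefficients being normalized, $\sup_S\|F\|\le C_0$ and $\sup_S\|DF\|\le C_1$ with $C_0,C_1$ depending only on $k,d$, so on $S$
$$|u_f|\le \tfrac1d\big(|\log h(f)|+\log C_0\big),\qquad |\nabla u_f|\le \frac1d\frac{\|DF\|}{\|F\|}\le\frac{C_1}{d\,h(f)}.$$
Using $h(f)\le C_0$ and the boundedness of $t\mapsto t|\log t|$ to absorb the logarithmic term, this yields $\|u_f\|_{C^1}\le C_2/h(f)$ with $C_2=C_2(k,d)$. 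Thus the whole proposition reduces to bounding $h(f)$ from below by a power of $\mathrm{dist}(f,\Mcal)$.

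This last point is the heart of the matter, and the step I expect to be the main obstacle. The key observation is that $g(f):=h(f)^2=\min_{z\in S}\|F(z)\|^2$ is a continuous function on the compact space $\Pp^N(\Cc)$ whose zero set is exactly $\Mcal$, since a common zero of $F_0,\dots,F_k$ on $S$ is precisely what places $f$ in $\Mcal$. Moreover $g$ is semialgebraic: it is a minimum over the compact semialgebraic set $S$ of a polynomial family in the coefficients, so its graph is semialgebraic by Tarski--Seidenberg. The Łojasiewicz inequality for continuous semialgebraic functions on a compact set then furnishes constants $c>0$ and $\alpha>0$ with
$$g(f)\ge c\,\mathrm{dist}(f,\Mcal)^{\alpha}\qquad\text{for all }f,$$
whence $h(f)\ge c^{1/2}\,\mathrm{dist}(f,\Mcal)^{\alpha/2}$. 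Combined with the previous paragraph this gives $\|u_f\|_{C^1}\le C\,\mathrm{dist}(f,\Mcal)^{-p}$ with $p=\alpha/2$, which is the desired estimate. If one prefers an effective route rather than the abstract Łojasiewicz inequality, the alternative is to pass through the resultant $\mathrm{Res}(F)$, whose vanishing cuts out $\Mcal$: one bounds $h(f)$ below by a power of $|\mathrm{Res}(F)|$ and then $|\mathrm{Res}(F)|$ below by a power of $\mathrm{dist}(f,\Mcal)$, the latter again being a Łojasiewicz-type comparison for the polynomial $\mathrm{Res}$ on the compact $\Pp^N(\Cc)$.
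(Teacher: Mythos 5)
Your argument is correct, but it takes a genuinely different route from the paper's. You build the potential explicitly from the normalized homogeneous lift, $u_f=\tfrac1d\log(\|F(z)\|/\|z\|^d)-\tfrac1d\log C_0$, so that the entire estimate reduces to a lower bound for $h(f)=\min_{\|z\|=1}\|F(z)\|$, which you obtain from the {\L}ojasiewicz inequality applied (via Tarski--Seidenberg) to the semialgebraic function $\min_S\|F\|^2$, whose zero set is exactly $\mathcal{M}$. The paper instead takes $u_f(z)=\int_{\xi\neq z}\frac{f^*\omega}{d}(\xi)\wedge K(z,\xi)$ with the negative kernel $K$ of \cite{DS6}, which gives $u_f\le 0$ for free; it bounds the data by $f^*\omega\le C''\|D_xf\|_\infty^2\,\omega$ and controls $\|D_xf\|$ by $dist(f,\mathcal{M})^{-p'}$ using Lemma 2.1 of \cite{DiDu} applied to the meromorphic family $\Phi(x,f)=f(x)$ on $\Pp^k(\Cc)\times\Pp^N(\Cc)$, the $C^1$ bound then coming from differentiation under the integral sign (the singularity $|z-\xi|^{1-2k}$ of $\nabla K$ being integrable). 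Your proof is more elementary and self-contained, exhibiting $u_f$ as a normalization of the classical Green potential and avoiding the kernel machinery; the quantitative input is ultimately of the same nature in both cases, since the derivative estimate of \cite{DiDu} is itself a {\L}ojasiewicz-type bound, while the paper's route has the advantage of applying verbatim to meromorphic maps and to general positive closed currents. Two small points to tighten in your write-up: the Fubini--Study distance is not literally semialgebraic (because of the arccosine), so you should compare it with the equivalent chordal distance before invoking {\L}ojasiewicz; and note that the paper reuses its kernel representation in Proposition \ref{prop4}, where $u_f-u_g=\int(\frac{f^*\omega}{d}-\frac{g^*\omega}{d})\wedge K$ yields the Lipschitz comparison at once --- with your explicit potential that comparison would require its own (easy, but separate) argument on $\log\|F\|-\log\|G\|$ together with control of the phase ambiguity in the normalized lifts.
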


\begin{proof}

Nous utilisons ici la preuve du Théorème 2.3.1 de \cite{DS6}. Dans celle-ci, un potentiel $u_f$ est donné par:

$$u_f(z)= \int_{\xi \neq z} \frac{f^* \omega}{d}(\xi) \wedge K(z, \xi).$$

Ici $K$ est une $(k-1,k-1)$ forme négative lisse en dehors de la diagonale $\Delta$ de $\Pp^k(\Cc) \times \Pp^k(\Cc)$, dont les coefficients ont des singularités du type $\log |z - \xi | \times |z - \xi|^{2-2k}$ près de $\Delta$ et les singularités de $\nabla K$ sont en $ |z - \xi|^{1-2k}$ près de $\Delta$.

La forme $\frac{f^* \omega}{d}$ se contrôle grâce aux dérivées de $f$. On va donc estimer ces dérivées en fonction de la distance entre $f$ et l'ensemble $\mathcal{M}$.

Soit $\Phi : \Pp^k(\Cc) \times \Pp^N(\Cc) \rightarrow \Pp^k(\Cc) $ l'application $\Phi(x,f)=f(x)$. C'est une application méromorphe et son ensemble d'indétermination est inclus dans $\mathcal{A}= \Pp^k(\Cc) \times \mathcal{M}$.

En appliquant le lemme 2.1 de \cite{DiDu}, on obtient l'existence de constantes $C'$ et $p'$ telles que pour $X \notin \mathcal{A}$:

$$\| D_X \Phi \| \leq C' dist(X, \mathcal{A})^{-p'}.$$

Si $f \notin \mathcal{M}$, on a $(x,f) \notin \mathcal{A}$ pour tout $x \in \Pp^k(\Cc) $ et alors

$$ \| D_x f \| \leq \| D_{(x,f)} \Phi \| \leq C' dist((x,f), \mathcal{A})^{-p'}=C' dist(f, \mathcal{M})^{-p'} .$$

Grâce à cette majoration, on va pouvoir estimer le potentiel $u_f$.

En effet, si on se place dans des cartes de $\Pp^k(\Cc) $, on peut écrire $\omega= \sum_{i,j=1}^{k} h_{i,j} d z_i \wedge d \overline{z_j}$. La forme $f^* \omega$ est donc égale à $\sum_{i,j=1}^{k} h_{i,j}(f) d f_i \wedge d \overline{f_j}$ et on peut donc la majorer par $C'' \| D_x f \|_{\infty}^2 \omega$ où $C''$ est une constante indépendante de $f$.

La fonction $u_f$ est bien négative (car $K$ l'est) et

$$u_f(z) \geq  \int_{\xi \neq z} \frac{C''}{d}  \| D_x f \|_{\infty}^2 \omega(\xi) \wedge K(z, \xi) \geq  \int_{\xi \neq z} \frac{C''}{d}  C'^2 dist(f, \mathcal{M})^{-2p'} \omega(\xi) \wedge K(z, \xi) ,$$

d'où

$$\| u_f \|_{\infty} \leq C dist(f, \mathcal{M})^{-p}$$

avec $C$ et $p$ indépendantes de $f$.

Pour contrôler les dérivées de $u_f$ on fait comme dans le lemme 2.3.5 de \cite{DS6}: on dérive sous le signe intégral. Comme les singularités de $\nabla K$ sont en $ |z - \xi|^{1-2k}$ près de $\Delta$, on obtient que $u_f$ est $C^1$ et en faisant un raisonnement du même type que le précédent, on a 

$$ \| u_f \|_{C^1} \leq C dist(f, \mathcal{M})^{-p}$$

avec $C$ et $p$ des constantes indépendantes de $f$.

\end{proof}

A partir de la proposition précédente, on obtient:

\begin{corollaire}

Soit $K$ un compact de $\Pp^N(\Cc)$ disjoint de $\mathcal{M}$. Il existe une constante $C$ telle que pour tout $f \in K$, on ait une fonction $u_f$ avec $\frac{f^* \omega}{d} = \omega + dd^c u_f$, $u_f \leq 0$ et

$$ \| u_f \|_{C^1} \leq C .$$

\end{corollaire}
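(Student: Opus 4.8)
Le plan est de se ramener directement \`a la Proposition \ref{prop2}, qui contient d\'ej\`a tout le travail analytique. Puisque $K$ est disjoint de $\mathcal{M}$, on a $K \subset \mathcal{H}_d$, et la Proposition \ref{prop2} fournit donc pour chaque $f \in K$ une fonction $u_f$ telle que $\frac{f^* \omega}{d} = \omega + dd^c u_f$, $u_f \leq 0$ et $\| u_f \|_{C^1} \leq C \, dist(f, \mathcal{M})^{-p}$. Il ne reste alors qu'\`a contr\^oler uniform\'ement la quantit\'e $dist(f, \mathcal{M})^{-p}$ lorsque $f$ parcourt $K$.

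Pour cela, je commencerais par remarquer que $\mathcal{M}$ est ferm\'e dans $\Pp^N(\Cc)$: c'est le compl\'ementaire de l'ouvert de Zariski $\mathcal{H}_d$, donc une sous-vari\'et\'e alg\'ebrique, ferm\'ee pour la topologie usuelle. La fonction $f \mapsto dist(f, \mathcal{M})$ est ainsi continue et strictement positive sur le compact $K$ (car $K \cap \mathcal{M} = \emptyset$); elle y atteint son minimum $\delta := \min_{f \in K} dist(f, \mathcal{M})$, qui est $> 0$.

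On conclut imm\'ediatement: pour tout $f \in K$ on a $dist(f, \mathcal{M})^{-p} \leq \delta^{-p}$, d'o\`u
$$ \| u_f \|_{C^1} \leq C \delta^{-p}.$$
En posant $C' := C \delta^{-p}$, qui ne d\'epend que de $K$ et non de $f$, on obtient l'in\'egalit\'e annonc\'ee avec une constante uniforme.

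Il n'y a pas ici d'obstacle r\'eel: l'estim\'ee d\'elicate a d\'ej\`a \'et\'e \'etablie dans la Proposition \ref{prop2}. Le seul point \`a v\'erifier avec soin est que la compacit\'e de $K$ et la fermeture de $\mathcal{M}$ assurent bien $\delta > 0$; une fois ce minorant positif acquis, le passage d'une borne d\'ependant de $f$ \`a une borne uniforme sur $K$ est purement formel.
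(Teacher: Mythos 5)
Votre preuve est correcte et correspond exactement \`a l'argument implicite du papier, qui \'enonce ce corollaire comme cons\'equence imm\'ediate de la Proposition \ref{prop2} sans d\'etailler la minoration $\delta = \min_{f \in K} dist(f,\mathcal{M}) > 0$. Vous avez simplement explicit\'e ce pas de compacit\'e, ce qui est le seul contenu du corollaire.
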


Dans la suite, on aura besoin de comparer deux potentiels $u_f$ et $u_g$ pour $f$ et $g$ dans $\mathcal{H}_d$. Pour cela, on a 
 
 \begin{proposition}{\label{prop4}}
 
 Soit $K$ un compact de $\Pp^N(\Cc)$ disjoint de $\mathcal{M}$.  Il existe une constante $C$ telle que pour tout $f,g \in K$ les potentiels $u_f$ et $u_g$ de la proposition précédente vérifient:
 
 $$ \| u_f -u_g \|_{\infty} \leq C dist(f,g).$$
 
 \end{proposition}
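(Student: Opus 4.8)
Le plan est d'exploiter la formule intégrale explicite du potentiel obtenue dans la preuve de la Proposition \ref{prop2}, à savoir
$$u_f(z)= \int_{\xi \neq z} \frac{f^* \omega}{d}(\xi) \wedge K(z, \xi),$$
où $K(z,\xi)$ désigne le noyau $(k-1,k-1)$ de cette preuve (à ne pas confondre avec le compact $K$ de l'énoncé). Comme $u_f$ et $u_g$ sont tous deux donnés par cette même formule, leur différence se lit directement
$$u_f(z)-u_g(z)= \frac{1}{d}\int_{\xi \neq z} \bigl(f^* \omega - g^* \omega\bigr)(\xi) \wedge K(z, \xi),$$
et le problème se ramène à majorer $\| f^* \omega - g^* \omega \|_\infty$ par $C\, dist(f,g)$, puis à vérifier que la masse du noyau est uniformément bornée en $z$.

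D'abord, je majorerais $\| f^* \omega - g^* \omega \|_\infty$ par une comparaison ponctuelle des applications $f,g$ et de leurs dérivées premières. En écrivant dans des cartes $f^* \omega = \sum_{i,j} h_{i,j}(f)\, df_i \wedge d\overline{f_j}$ (et de même pour $g$), on décompose la différence en une partie ``coefficient'', de la forme $(h_{i,j}(f)-h_{i,j}(g))\, df_i \wedge d\overline{f_j}$, et une partie ``dérivées'', de la forme $h_{i,j}(g)\,(df_i \wedge d\overline{f_j}-dg_i \wedge d\overline{g_j})$, cette dernière se scindant télescopiquement en $(df_i-dg_i)\wedge d\overline{f_j}$ et $dg_i \wedge (d\overline{f_j}-d\overline{g_j})$. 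Chacun de ces termes se contrôle dès que l'on dispose des estimations de Lipschitz uniformes
$$\sup_{x} |f(x)-g(x)| \leq C\, dist(f,g), \qquad \sup_{x} \| D_x f - D_x g \| \leq C\, dist(f,g),$$
valables pour $f,g \in K$, en utilisant la régularité des coefficients $h_{i,j}$ ainsi que la borne uniforme $\| D_x f \|_\infty \leq C$ sur le compact $K$ fournie par le corollaire précédent.

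Ces deux estimations de Lipschitz proviennent de l'application $\Phi(x,f)=f(x)$ de la preuve de la Proposition \ref{prop2}. Sur le compact $\Pp^k(\Cc) \times K$, qui évite l'ensemble d'indétermination $\mathcal{A}= \Pp^k(\Cc) \times \mathcal{M}$, l'application $\Phi$ et sa dérivée $D_x \Phi$ sont lisses, donc uniformément $C^1$, et en particulier lipschitziennes en la variable $f$; l'inégalité des accroissements finis appliquée à $f \mapsto \Phi(x,f)$ et à $f \mapsto D_x \Phi(x,f)$ donne alors les deux majorations voulues, uniformément en $x \in \Pp^k(\Cc)$. On en déduit $\| f^* \omega - g^* \omega \|_\infty \leq C\, dist(f,g)$.

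Il reste à intégrer contre le noyau. Comme les coefficients de $K(z,\xi)$ ont des singularités du type $\log|z-\xi| \times |z-\xi|^{2-2k}$ près de la diagonale, elles sont intégrables sur la fibre $\{z\} \times \Pp^k(\Cc)$ de dimension réelle $2k$, de sorte que la quantité $\sup_z \int_\xi |K(z,\xi)|$ (masse des coefficients du noyau intégrés contre la forme volume) est finie. On conclut
$$|u_f(z)-u_g(z)| \leq \frac{1}{d} \| f^* \omega - g^* \omega \|_\infty \; \sup_z \int_{\xi} |K(z,\xi)| \leq C\, dist(f,g),$$
uniformément en $z$, ce qui est l'estimation cherchée. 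Le point délicat est l'estimation uniforme $\| D_x f - D_x g \| \leq C\, dist(f,g)$ : il faut y contrôler simultanément la dérivation de $\Phi$ en $x$ puis en $f$, ce que garantit précisément la régularité de $\Phi$ sur $\Pp^k(\Cc) \times K$ hors de $\mathcal{A}$.
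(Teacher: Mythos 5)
Votre preuve est correcte et suit pour l'essentiel la m\^eme d\'emarche que celle de l'article : m\^eme formule int\'egrale pour $u_f-u_g$ avec le noyau n\'egatif, m\^eme d\'ecomposition de $f^*\omega-g^*\omega$ en cartes (partie \emph{coefficients} plus partie \emph{d\'eriv\'ees} t\'elescop\'ee), et m\^eme r\'eduction au contr\^ole lipschitzien de $f\mapsto f(x)$ et $f\mapsto D_xf$ via l'application $\Phi$. La seule diff\'erence est de d\'etail : l\`a o\`u vous invoquez la r\'egularit\'e de $\Phi$ sur le compact $\Pp^k(\Cc)\times K$, l'article utilise l'estim\'ee quantitative de $\|D^2\Phi\|$ tir\'ee de \cite{DiDu} et prend soin de distinguer le cas o\`u $dist(f,g)$ est petit (pour que le segment joignant $(z,f)$ \`a $(z,g)$ reste \`a distance uniforme de $\mathcal{A}$, ce qui l\'egitime l'in\'egalit\'e des accroissements finis) du cas o\`u $dist(f,g)$ est minor\'ee (o\`u tout est uniform\'ement born\'e), pr\'ecaution qu'il vaudrait la peine d'expliciter aussi dans votre argument.
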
 

Pour démontrer cette proposition, on utilisera le lemme suivant:

\begin{lemme}

Soit $K$ un compact de $\Pp^N(\Cc)$ disjoint de $\mathcal{M}$. Il existe une constante $C$ telle que pour tout $f, g \in K$ on ait

$$-C dist(f,g) \omega \leq \frac{f^* \omega}{d}- \frac{g^* \omega}{d} \leq C dist(f,g) \omega.$$

\end{lemme}

\begin{proof}

Quitte à considérer une partition de l'unité, on peut supposer que $\omega$ est à support dans un compact $L$ d'une carte. Il suffit aussi de comparer $\frac{f^* \omega}{d}$ et $\frac{g^* \omega}{d}$ sur un ouvert $U$ relativement compact dans une carte de $\Pp^k(\Cc)$.

Si on écrit $\omega= \sum_{i,j=1}^{k} h_{i,j} d z_i \wedge d \overline{z_j}$, on a $\frac{f^* \omega}{d}$ qui est égale à $\sum_{i,j=1}^{k} \frac{h_{i,j}(f)}{d} d f_i \wedge d \overline{f_j}$. La différence entre $\frac{f^* \omega}{d}$ et $\frac{g^* \omega}{d}$ est donc une somme finie de termes (dont le nombre ne dépend que de la dimension $k$) du type:  

$$ \left( \frac{h_{i,j}(f)}{d} \frac{\partial f_i}{\partial z_l} \frac{\partial \overline{f_j}}{\partial \overline{z_p}} - \frac{h_{i,j}(g)}{d} \frac{\partial g_i}{\partial z_l} \frac{\partial \overline{g_j}}{\partial \overline{z_p}} \right) dz_l \wedge d \overline{ z_p}.$$

En module, le coefficient de ce terme est plus petit que

\begin{equation*}
\begin{split}
 I=&\left|  \frac{h_{i,j}(f)}{d} \frac{\partial f_i}{\partial z_l} \frac{\partial \overline{f_j}}{\partial \overline{z_p}} - \frac{h_{i,j}(f)}{d} \frac{\partial g_i}{\partial z_l} \frac{\partial \overline{g_j}}{\partial \overline{z_p}} \right| + \left| \frac{h_{i,j}(f)}{d} \frac{\partial g_i}{\partial z_l} \frac{\partial \overline{g_j}}{\partial \overline{z_p}} - \frac{h_{i,j}(g)}{d} \frac{\partial g_i}{\partial z_l} \frac{\partial \overline{g_j}}{\partial \overline{z_p}} \right|\\
& \leq \left| \frac{h_{i,j}}{d} \right|_{\infty, L} \left| \frac{\partial f_i}{\partial z_l} \frac{\partial \overline{f_j}}{\partial \overline{z_p}} -  \frac{\partial g_i}{\partial z_l} \frac{\partial \overline{g_j}}{\partial \overline{z_p}} \right| + \| D_zg \|^2 \left| \frac{h_{i,j}(f)}{d} - \frac{h_{i,j}(g)}{d} \right|.
\end{split}
\end{equation*}

Ici $| \cdot |_{\infty, L}$ désigne le sup du module de la fonction sur $L$.

En utilisant l'inégalité $|ab-a'b'| \leq |a| |b-b'| + |b'| |a-a'|$ on obtient que (si $z$ est le point où on estime le coefficient)

$$I \leq C'  (\| D_z f\|\| D_z f - D_z g \|+  \| D_z g\|\| D_z f - D_z g \|  + \| D_zg \|^2 dist(f,g)).$$

Comme $f$ et $g$ sont dans le compact $K$ qui est disjoint de $\mathcal{M}$, il existe une constante $C''$ avec $\| D_z f\|_{\infty} \leq C''$ et $\| D_z g\|_{\infty} \leq C''$ (voir la preuve de la proposition précédente).

Maintenant, si on reprend la fonction $\Phi : \Pp^k(\Cc) \times \Pp^N(\Cc) \rightarrow \Pp^k(\Cc) $ définie par $\Phi(x,f)=f(x)$, on a (toujours par le lemme 2.1 de \cite{DiDu}) l'existence de constantes $C'''$ et $p'''$ telles que pour $X \notin \mathcal{A}$:

$$\| D_X \Phi \| + \|D_X^2 \Phi \| \leq C''' dist(X, \mathcal{A})^{-p'''}.$$

On en déduit que

$$\| D_z f - D_z g \| \leq \| D_{(z ,f)} \Phi - D_{(z ,g)} \Phi \| \leq C''' dist([(z,f),(z,g)], \mathcal{A})^{-p'''} dist((z,f),(z,g))$$

où $[(z,f),(z,g)]$ désigne un segment vertical qui rejoint $(z,f)$ à $(z,g)$.

Soit $\epsilon_0 >0$ plus petit que la distance de $K$ à $\mathcal{M}$ divisée par $2$.

Si $g$ est dans la boule de centre $f$ et de rayon $\epsilon_0$, on obtient que

$$dist([(z,f),(z,g)], \mathcal{A})^{-p'''} \leq \epsilon_0^{-p'''}$$

indépendamment de $z$, d'où

$$\| D_z f - D_z g \|_{\infty} \leq C'''  \epsilon_0^{-p'''} dist(f,g).$$

On obtient alors que

$$I \leq C dist(f,g) $$

où $C$ est indépendante de $f$ et $g$ pour $g$ dans la boule de centre $f$ et de rayon $\epsilon_0$. Si $g$ est en dehors de cette boule, quitte à prendre $C$ plus grande, cette inégalité est toujours vraie car $I$ est uniformément majorée pour $f, g \in K$.

Cela démontre le lemme.

\end{proof}

Grâce à ce lemme, on obtient immédiatement la proposition précédente car par la preuve de la proposition \ref{prop2}, on a

$$u_f(z)-u_g(z)= \int_{\xi \neq z} \left( \frac{f^* \omega}{d}(\xi) - \frac{g^* \omega}{d}(\xi) \right) \wedge K(z, \xi)$$

et $K$ est négative.

\subsection{\bf{Fonctions dsh:}}

Commençons par quelques rappels sur les fonctions dsh (voir par exemple \cite{DS2}).

Une fonction $\varphi$ est {\it quasi-plurisousharmonique} (qpsh) si elle s'écrit localement comme la somme d'une fonction psh et d'une fonction $C^{\infty}$ . Une telle fonction vérifie $dd^c \varphi \geq -c \omega$ au sens des courants pour une constante $c \geq 0$. 

Un ensemble de $\Pp^k(\Cc)$ est dit {\it pluripolaire} s'il est inclus dans $\{ \varphi = - \infty \}$ où $\varphi$ est une fonction qpsh. On appelle maintenant {\it fonction dsh} toute fonction, définie en dehors d'un pluripolaire, qui s'écrit comme différence de deux fonctions qpsh. Notons $\mbox{DSH}(\Pp^k(\Cc))$ l'ensemble des fonctions dsh sur $\Pp^k(\Cc)$. Si $\varphi$  est une fonction dsh, il existe deux courants $T^{\pm}$ positifs fermés de bidegré $(1,1)$ tels que $dd^c \varphi= T^+ - T^-$. On peut alors définir une norme (voir \cite{DNS} paragraphe 3):

$$\| \varphi \|_{\mbox{DSH}} := \| \varphi \|_{L^1(\Pp^k(\Cc))} + \inf \| T^{\pm} \|$$

avec $T^{\pm}$ comme précédemment.

Soit $\mu$ une mesure qui s'écrit $\mu=T^k$ où $T$ est un $(1,1)$ courant positif à potentiel continu. On peut alors définir une norme équivalente à la précédente (voir \cite{DNS}):

$$\| \varphi \|^{\mu}_{\mbox{DSH}}:= | \langle \mu, \varphi \rangle | + \inf \| T^{\pm} \|.$$

Dans cet article, nous aurons besoin d'estimations sur les constantes en jeu pour l'équivalence des normes. Ce sera l'objet des deux propositions suivantes.

\begin{proposition}{\label{estimation1}}

Soit $\mu$ une mesure qui s'écrit $\mu=T^k$ où $T$ est un courant positif de la forme $T= \omega + dd^c g$ avec $g$ fonction continue. Alors pour toute fonction $\varphi$ DSH, on a

$$ | \langle \mu, \varphi \rangle |  \leq C(1+ \|g\|_{\infty} ) \| \varphi \|_{\mbox{DSH}}$$

avec $C$ qui ne dépend que de $\Pp^k(\Cc)$. En particulier,  

$$ \| \varphi \|^{\mu}_{\mbox{DSH}}  \leq C'(1+ \|g\|_{\infty} ) \| \varphi \|_{\mbox{DSH}}$$

avec $C'$ qui ne dépend que de $\Pp^k(\Cc)$.

\end{proposition}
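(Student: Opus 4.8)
The plan is to expand $\mu = T^k = (\omega + dd^c g)^k$ and to compare it, copy of $dd^c g$ by copy of $dd^c g$, with $\omega^k$, each time transferring the $dd^c g$ onto $\varphi$ by integration by parts. First I would set $T_m := (\omega + dd^c g)^m \wedge \omega^{k-m}$ for $m = 0, \dots, k$, so that $T_0 = \omega^k$ and $T_k = \mu$; each $T_m$ is a well-defined positive measure because $T$ has the continuous potential $g$ (Bedford--Taylor). The initial term is harmless: since $\omega^k$ is a smooth volume form, $|\langle \omega^k, \varphi\rangle| \le C \|\varphi\|_{L^1(\Pp^k)} \le C \|\varphi\|_{\mbox{DSH}}$, with $C$ depending only on $\Pp^k$.

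The heart of the argument is the estimate of the successive differences. Writing $S_m := (\omega + dd^c g)^{m-1}\wedge\omega^{k-m}$, a positive closed $(k-1,k-1)$-current with continuous potential, a direct computation gives $T_m - T_{m-1} = dd^c g\wedge S_m$. I would then integrate by parts, using that $S_m$ is closed and $\Pp^k$ is compact without boundary, to obtain
$$\langle T_m - T_{m-1},\varphi\rangle = \int_{\Pp^k} \varphi\, dd^c g\wedge S_m = \int_{\Pp^k} g\, dd^c\varphi\wedge S_m.$$
Decomposing $dd^c\varphi = T^+ - T^-$ with $T^{\pm}$ positive closed of bidegree $(1,1)$ and bounding $|g|\le\|g\|_{\infty}$ gives $|\langle T_m - T_{m-1},\varphi\rangle|\le\|g\|_{\infty}(\|T^+\wedge S_m\| + \|T^-\wedge S_m\|)$.

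Next I would compute these masses cohomologically. The products $T^{\pm}\wedge S_m$ are positive measures (legitimate because $S_m$ carries a continuous potential), so their total mass equals the cohomological intersection of $\{T^{\pm}\}$ with $\{S_m\}$. Since $dd^c g$ is exact we have $\{T\} = \{\omega\}$, hence $\{S_m\} = \{\omega\}^{k-1}$ and $\|T^{\pm}\wedge S_m\| = \|T^{\pm}\|$. Summing over $m = 1,\dots,k$ and then taking the infimum over admissible decompositions of $dd^c\varphi$ (the left-hand side being independent of the chosen decomposition) yields $|\langle\mu - \omega^k,\varphi\rangle|\le 2k\|g\|_{\infty}\inf\|T^{\pm}\|\le 2k\|g\|_{\infty}\|\varphi\|_{\mbox{DSH}}$. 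Combined with the bound on $\langle\omega^k,\varphi\rangle$ this proves the first inequality, and the second is then immediate: by definition $\|\varphi\|^{\mu}_{\mbox{DSH}} = |\langle\mu,\varphi\rangle| + \inf\|T^{\pm}\|$, and $\inf\|T^{\pm}\|\le\|\varphi\|_{\mbox{DSH}}$, so both summands are bounded by $C(1+\|g\|_{\infty})\|\varphi\|_{\mbox{DSH}}$.

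The main obstacle is rigorously justifying the integration by parts $\int\varphi\,dd^c g\wedge S_m = \int g\,dd^c\varphi\wedge S_m$ when $g$ is merely continuous and $\varphi$ is only DSH, hence possibly equal to $-\infty$ on a pluripolar set. I would handle this by regularization: approximate $g$ by smooth functions and $\varphi$ by its standard smooth DSH approximants, perform the integration by parts in the smooth setting where no boundary terms appear on the compact manifold $\Pp^k$, and pass to the limit using the continuity of $g$, the continuity of the potential of $S_m$ (which makes the Bedford--Taylor products converge), and the uniform mass bounds on $T^{\pm}\wedge S_m$. Controlling this limiting process, rather than the purely cohomological mass computation, is where the real care is required.
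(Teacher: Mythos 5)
Your proof is correct and follows essentially the same route as the paper: your telescoping sum $\mu - \omega^k = \sum_{m=1}^k dd^c g \wedge T^{m-1}\wedge\omega^{k-m}$ is exactly the iteration the paper performs (replacing one factor $T$ by $\omega$ at each step), with the same integration by parts onto $dd^c\varphi = T^+ - T^-$ and the same mass identity $\|T^{\pm}\wedge S_m\| = \|T^{\pm}\|$, ending with the bound $|\langle\omega^k,\varphi\rangle| + 2k\|g\|_{\infty}\|\varphi\|_{\mbox{DSH}}$. The only differences are presentational (explicit telescoping versus iterated inequality) and your extra care in justifying the integration by parts, which the paper performs without comment.
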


\begin{proof}

On a 

\begin{equation*}
\begin{split}
| \langle \mu, \varphi \rangle |=| \langle T^k, \varphi \rangle | & \leq | \langle \omega \wedge T^{k-1}, \varphi \rangle | + | \langle dd^c g \wedge T^{k-1} , \varphi \rangle |\\
& =| \langle \omega \wedge T^{k-1}, \varphi \rangle | + | \langle  g ,  T^{k-1}  \wedge dd^c \varphi \rangle| \\
& \leq | \langle \omega \wedge T^{k-1}, \varphi \rangle | + | \langle  g ,  T^{k-1}  \wedge T^+ \rangle|  + | \langle  g ,  T^{k-1}  \wedge T^- \rangle|
\end{split}
\end{equation*}

où $dd^c \varphi= T^+ - T^-$ comme précédemment. Mais $T^{k-1}  \wedge T^{\pm}$ sont des mesures positives de masse $\| T^{+} \|$. On a donc

$$| \langle \mu, \varphi \rangle | \leq | \langle \omega \wedge T^{k-1}, \varphi \rangle | + 2 \|g\|_{\infty} \| \varphi \|_{\mbox{DSH}}.$$

Maintenant, il s'agit de recommencer ce que l'on vient de faire avec $\langle \omega \wedge T^{k-1}, \varphi \rangle |$ à la place de $| \langle T^k, \varphi \rangle | $ et ainsi de suite.

 A la fin, on obtient

$$| \langle \mu, \varphi \rangle | \leq | \langle \omega^k , \varphi \rangle | + 2k \|g\|_{\infty} \| \varphi \|_{\mbox{DSH}},$$

ce qui donne la proposition.

\end{proof}

On va maintenant donner une estimée pour la constante de l'autre inégalité d'équivalence de norme.

\begin{proposition}{\label{estimation2}}

Soit $\mu$ une mesure qui s'écrit $\mu=T^k$ où $T$ est un courant positif de la forme $T= \omega + dd^c g$ avec $g$ fonction continue. Alors pour toute fonction $\varphi$ DSH, on a

$$\| \varphi \|_{\mbox{DSH}} \leq C (1+ \|g\|_{\infty} ) \| \varphi \|^{\mu}_{\mbox{DSH}}$$

où $C$ est une constante qui ne dépend que de $\Pp^k(\Cc)$.

\end{proposition}

\begin{proof}

Il s'agit ici de montrer

$$\| \varphi \|_{L^1(\Pp^k(\Cc))}  \leq C' (1+ \|g\|_{\infty} ) \| \varphi \|^{\mu}_{\mbox{DSH}}$$

où $C'$ ne dépend que de $\Pp^k(\Cc)$.

La fonction $\varphi $ est DSH donc on peut écrire $dd^c \varphi =T^+ - T^-$ avec $T^{\pm}$ des courants positifs fermés. En utilisant le théorème 2.3.1 de \cite{DS6}, on obtient l'existence de fonctions qpsh $\varphi^{\pm}$ telles que

$$dd^c \varphi^+ - dd^c  \varphi^-=T^+ - T^- \mbox{  et  }  \| \varphi^{\pm} \|_{\mbox{DSH}} \leq A \| T^{\pm} \|$$

où $A$ est une constante qui ne dépend que de $\Pp^k(\Cc)$.

Comme $\varphi$ est DSH, elle s'écrit en dehors d'un pluripolaire, $\varphi= \varphi_1 - \varphi_2$ où $\varphi_1$ et $ \varphi_2$ sont des fonctions qpsh. On a donc, presque partout, $\varphi_1 - \varphi_2=     \varphi^+ -  \varphi^- + a$ où $a$ est une constante. Comme les fonctions sont qpsh, on a l'égalité partout et alors $\varphi=\varphi^+ -  \varphi^- + a$ en dehors d'un pluripolaire.

Maintenant, on a

\begin{equation*}
\begin{split}
\| \varphi \|_{L^1(\Pp^k(\Cc))}  & \leq  \| \varphi^+ \|_{L^1(\Pp^k(\Cc))}  +   \| \varphi^- \|_{L^1(\Pp^k(\Cc))} + |a|\\
& \leq 2A \|T^{\pm} \| + |a| \leq 2A \| \varphi \|^{\mu}_{\mbox{DSH}} + |a|.
\end{split}
\end{equation*}

Il s'agit donc d'estimer la constante $a$.

Comme $\mu=T^k$ et que le potentiel de $T$ est continu, $\mu$ ne charge pas les ensembles pluripolaires et on a:

$$ \langle \mu , \varphi \rangle= \langle \mu , \varphi^+ -  \varphi^-  \rangle + \langle \mu , a \rangle= \langle \mu , \varphi^+ -  \varphi^-  \rangle + a.$$

Ce qui implique, par la proposition précédente,

\begin{equation*}
\begin{split}
|a| &\leq  | \langle  \mu , \varphi^+ \rangle | + | \langle  \mu , \varphi^- \rangle | + |\langle \mu , \varphi \rangle |\\
& \leq \| \varphi^+ \|^{\mu}_{\mbox{DSH}} + \| \varphi^- \|^{\mu}_{\mbox{DSH}} + \| \varphi \|^{\mu}_{\mbox{DSH}}\\
& \leq C'(1+ \|g\|_{\infty} ) \| \varphi^+ \|_{\mbox{DSH}} + C'(1+ \|g\|_{\infty} ) \| \varphi^- \|_{\mbox{DSH}} +  \| \varphi \|^{\mu}_{\mbox{DSH}}\\
& \leq 2 A C'(1+ \|g\|_{\infty} ) \| T^{\pm} \| + \| \varphi \|^{\mu}_{\mbox{DSH}}\\
& \leq 2 A C'(1+ \|g\|_{\infty} ) \| \varphi \|^{\mu}_{\mbox{DSH}} + \| \varphi \|^{\mu}_{\mbox{DSH}}.
\end{split}
\end{equation*}

Cela démontre la proposition.

\end{proof}

\section{\bf{Construction du courant de Green et de la mesure de Green aléatoires}}

Dans ce paragraphe, nous construisons le courant de Green et la mesure de Green aléatoires. Par rapport à \cite{FS2} et \cite{FW}, nos endomorphismes aléatoires $f_1, \cdots , f_n$ ne restent pas forcément autour d'un endomorphisme holomorphe, il peuvent s'approcher des applications méromorphes. Cependant, la condition d'intégrabilité de $\log dist(f, \mathcal{M}) $ et le fait que les endomorphismes suivent en quelque sorte une loi dictée par $\Lambda$, va faire que l'approche des applications méromorphes sera bonne.

\subsection{\bf{Construction du courant de Green}}{\label{courant}}

Commençons par démontrer le théorème \ref{theoreme1}. 

Précisons tout d'abord l'ensemble $A$. Comme on a $\int \log dist(f, \mathcal{M}) d \Lambda(f) > - \infty$, le théorème de Birkhoff implique qu'il existe un ensemble de mesure pleine pour $\Lambda$ (que l'on appellera $A$) tel que pour $f_0 \in A$ on ait

$$\frac{1}{n} \sum_{i=0}^{n-1} \log dist(F^{i}(f_0), \mathcal{M}) \rightarrow \int \log dist(f, \mathcal{M}) d \Lambda(f) > - \infty.$$

Fixons $f_0$ dans $ A$. Il s'agit de construire le courant de Green $T(f_0)$ associé.

Tout d'abord, grâce à la convergence précédente, on a

\begin{lemme}{\label{lemme}}
Pour tout $\epsilon>0$, il existe $n_0 \in \Nn$ tel que $dist(F^{n}(f_0), \mathcal{M}) \geq e^{- \epsilon n}$ pour $n \geq n_0$.

\end{lemme}

Admettons pour l'instant ce lemme et utilisons le avec $\epsilon$ petit devant $\frac{\log d}{p}$ où le $p$ est défini dans la proposition \ref{prop2}.

Par cette proposition, il existe des fonctions $u_i$ telles que $\frac{f_i^* \omega}{d} = \omega + dd^c u_i$ (où ici $f_i=F^{i}(f_0)$) et

$$\| u_i \|_{\infty} \leq C dist(f_i , \mathcal{M})^{-p} = C dist(F^{i}(f_0) , \mathcal{M})^{-p} \leq C e^{i \epsilon p}$$

pour $i \geq n_0$.

Cette estimée va montrer que $\frac{F_n^* \omega}{d^{n+1}}$ converge vers un courant $T(f_0)$ (ici on note $F_n = f_n \circ \cdots \circ f_0$).

Pour cela, on va utiliser la méthode classique de construction de courant de Green via des arguments de cohomologie.

On a tout d'abord $\frac{f_n^* \omega}{d} = \omega + dd^c u_n$. Ensuite, si on applique $\frac{f_{n-1}^*}{d}$ on obtient

$$\frac{f_{n-1}^*}{d}  \left( \frac{f_n^* \omega}{d} \right)=\frac{f_{n-1}^* \omega}{d} + \frac{dd^c u_n \circ f_{n-1}}{d} = \omega + dd^c u_{n-1} + \frac{dd^c u_n \circ f_{n-1}}{d} $$

et en recommençant ce procédé, on a:

$$\frac{(f_n \circ \cdots \circ f_0)^* \omega}{d^{n+1}}=\frac{f_0^* \circ \cdots \circ f_n^* \omega}{d^{n+1}}=\omega + dd^c \left( \sum_{i=0}^{n} \frac{u_i \circ f_{i-1} \circ \cdots \circ f_0}{d^{i}} \right).$$

Notons 

$$g_n=\sum_{i=0}^{n} \frac{u_i \circ f_{i-1} \circ \cdots \circ f_0}{d^{i}}.$$

L'hypothèse de convergence

$$\frac{1}{n} \sum_{i=0}^{n-1} \log dist(F^{i}(f_0), \mathcal{M}) \rightarrow \int \log dist(f, \mathcal{M}) d \Lambda(f) > - \infty$$

implique que les $f_i$ sont des endomorphismes holomorphes et que les $u_i$ sont continues (et même lisses).

En particulier $(g_n)$ est une suite de fonctions continues et on va montrer qu'elle vérifie le critère de Cauchy uniforme. Si $m \geq n \geq n_0$, on a

\begin{equation*}
\begin{split}
\|g_m - g_n \|_{\infty} &= \left\| \sum_{i=n+1}^m \frac{u_i \circ f_{i-1} \circ \cdots \circ f_0}{d^{i}} \right\|_{\infty} \leq \sum_{i=n+1}^m \frac{\| u_i \|_{\infty}}{d^i}\\
&\leq  C \sum_{i=n+1}^m  \frac{e^{i \epsilon p}}{d^i}= C \sum_{i=n+1}^m  \left( \frac{e^{\epsilon p}}{d} \right)^i  \leq C \sum_{i=n+1}^{+ \infty}  \left( \frac{e^{\epsilon p}}{d} \right)^i 
 \end{split}
\end{equation*}

et le denier terme est aussi petit que l'on veut si $n$ est assez grand.

La suite $(g_n)$ converge donc uniformément vers une fonction $g$ continue.

Cela implique que $\frac{F_n^* \omega}{d^{n+1}}= \omega + dd^c g_n$ converge vers $\omega + dd^c g$ au sens des courants. C'est ce dernier courant que l'on appellera courant de Green aléatoire de $f_0$ et que l'on notera $T(f_0)$.

Il reste maintenant à démontrer le lemme précédent.

\medskip

{\bf{Démonstration du lemme \ref{lemme}:}}

Soit $\epsilon >0$. Par hypothèse, on a 

$$\frac{1}{n} \sum_{i=0}^{n-1} \log dist(F^{i}(f_0), \mathcal{M}) \rightarrow \int \log dist(f, \mathcal{M}) d \Lambda(f) = C_0 > - \infty.$$

Il existe donc $n_0 \in \Nn$ tel que pour $n \geq n_0$, on ait

$$\left| \frac{1}{n} \sum_{i=0}^{n-1} \log dist(F^{i}(f_0), \mathcal{M})-C_0 \right| < \frac{\epsilon}{3}$$

et ainsi

$$\left| \sum_{i=0}^{n} \log dist(F^{i}(f_0), \mathcal{M}) -(n+1)C_0 - \sum_{i=0}^{n-1} \log dist(F^{i}(f_0), \mathcal{M}) + n C_0 \right| < (2n+1) \frac{\epsilon}{3}.$$

Pour $n \geq n_0$, on a alors

$$ | \log dist(F^{n}(f_0), \mathcal{M}) | \leq (2n+1) \frac{\epsilon}{3} + C_0$$

et le terme de droite est plus petit que $\epsilon n$ si $n$ est suffisamment grand.

Cela implique le résultat.

Remarquons aussi que l'on a une certaine invariance de l'ensemble $A$:

\begin{lemme}

On a $F(A) \subset A$.

\end{lemme}

\begin{proof}

Rappelons que $A$ est l'ensemble des points $f_0$ tels que

$$\frac{1}{n} \sum_{i=0}^{n-1} \log dist(F^{i}(f_0), \mathcal{M}) \rightarrow \int \log dist(f, \mathcal{M}) d \Lambda(f) > - \infty.$$

Soit $f_0 \in A$. Dans la démonstration du lemme précédent, on a montré que $\frac{1}{n} \log dist(F^{n}(f_0), \mathcal{M}) $ converge vers $0$. On a donc

\begin{equation*}
\begin{split}
&\frac{1}{n} \sum_{i=0}^{n-1} \log dist(F^{i}(f_1), \mathcal{M}) =\frac{1}{n} \sum_{i=0}^{n-1} \log dist(F^{i+1}(f_0), \mathcal{M})\\
&=\frac{1}{n} \sum_{i=0}^{n-1} \log dist(F^{i}(f_0), \mathcal{M}) - \frac{1}{n} \log dist(f_0, \mathcal{M}) + \frac{1}{n} \log dist( F^n(f_0), \mathcal{M})
\end{split}
\end{equation*}

qui converge bien vers $\int \log dist(f, \mathcal{M}) d \Lambda(f) > - \infty$ car $f_0 \in A$.

\end{proof}

\subsection{\bf{Courants d'ordre supérieur}}

Le courant $T(f_0)=\omega + dd^c g$ construit dans le paragraphe précédent est à potentiel continu. On peut donc considérer $T(f_0)^l$ et en particulier $\mu(f_0)=T(f_0)^k$ que l'on appellera mesure de Green aléatoire associée à $f_0$. On a aussi un résultat de convergence pour ces courants:

\begin{proposition}{\label{prop9}}

Si $f_0 \in A$, on a $\frac{F_n^* \omega^l}{d^{l(n+1)}} $ qui converge vers $T(f_0)^l$ quand $n$ tend vers l'infini.

\end{proposition}

\begin{proof}

On va faire une récurrence sur $l$.

Pour $l=1$ c'est le résultat du paragraphe précédent. Supposons la propriété vraie au rang $l-1$ et montrons que $\frac{F_n^* \omega^l}{d^{l(n+1)}} $ converge vers $T(f_0)^l$. On a

$$\frac{F_n^* \omega^l}{d^{l(n+1)}}= \frac{F_n^* \omega^{l-1}}{d^{(l-1)(n+1)}} \wedge  \frac{F_n^* \omega}{d^{n+1}}=\frac{F_n^* \omega^{l-1}}{d^{(l-1)(n+1)}} \wedge T(f_0) + \frac{F_n^* \omega^{l-1}}{d^{(l-1)(n+1)}} \wedge \left( \frac{F_n^* \omega}{d^{n+1}} -T(f_0) \right). $$

Commençons par le premier terme $\frac{F_n^* \omega^{l-1}}{d^{(l-1)(n+1)}} \wedge T(f_0) $. Il est égal à 

$$\frac{F_n^* \omega^{l-1}}{d^{(l-1)(n+1)}} \wedge \omega + \frac{F_n^* \omega^{l-1}}{d^{(l-1)(n+1)}} \wedge dd^c g .$$

Mais par hypothèse de récurrence, $\frac{F_n^* \omega^{l-1}}{d^{(l-1)(n+1)}} \wedge \omega$ converge vers $T(f_0)^{l-1} \wedge \omega$ et si $\psi$ est une forme test on a

$$ \left\langle \frac{F_n^* \omega^{l-1}}{d^{(l-1)(n+1)}} \wedge dd^c g , \psi \right\rangle= \left\langle \frac{F_n^* \omega^{l-1}}{d^{(l-1)(n+1)}} \wedge dd^c \psi  , g \right\rangle$$

qui converge vers

$$\langle T(f_0)^{l-1} \wedge dd^c \psi, g \rangle=\langle T(f_0)^{l-1} \wedge dd^c g, \psi \rangle$$

car $g$ est continue.

En résumé, le premier terme $\frac{F_n^* \omega^{l-1}}{d^{(l-1)(n+1)}} \wedge T(f_0) $ tend vers $T(f_0)^l$ quand $n$ tend vers l'infini.

Pour finir la démonstration, il reste à montrer que la deuxième partie $\frac{F_n^* \omega^{l-1}}{d^{(l-1)(n+1)}} \wedge \left( \frac{F_n^* \omega}{d^{n+1}} -T(f_0) \right)$ converge vers $0$.

Soit $\psi$ une $(k-l,k-l)$ forme test. On a

\begin{equation*}
\begin{split}
&\left| \left\langle \frac{F_n^* \omega^{l-1}}{d^{(l-1)(n+1)}} \wedge \left( \frac{F_n^* \omega}{d^{n+1}} -T(f_0) \right) , \psi \right\rangle \right|=\left| \left\langle \frac{F_n^* \omega^{l-1}}{d^{(l-1)(n+1)}} \wedge \left( -dd^c \sum_{i=n+1}^{+ \infty} \frac{u_i \circ f_{i-1} \circ \cdots \circ f_0}{d^i} \right) , \psi \right\rangle \right|\\
&=\left| \left\langle \frac{F_n^* \omega^{l-1}}{d^{(l-1)(n+1)}} \wedge dd^c \psi, - \sum_{i=n+1}^{+ \infty} \frac{u_i \circ f_{i-1} \circ \cdots \circ f_0}{d^i} \right\rangle \right|. \\
\end{split}
\end{equation*}

Maintenant, on peut trouver une constante $C(\psi)$ telle que $-C(\psi) \omega^{k-l+1} \leq dd^c \psi \leq C(\psi) \omega^{k-l+1} $. En écrivant $dd^c \psi= (dd^c \psi + C(\psi) \omega^{k-l+1})-C(\psi) \omega^{k-l+1}$ dans le dernier terme, on obtient

$$\left| \left\langle \frac{F_n^* \omega^{l-1}}{d^{(l-1)(n+1)}} \wedge \left( \frac{F_n^* \omega}{d^{n+1}} -T(f_0) \right) , \psi \right\rangle \right| \leq 3C(\psi) \left\langle \frac{F_n^* \omega^{l-1}}{d^{(l-1)(n+1)}} \wedge \omega^{k-l+1}, - \sum_{i=n+1}^{+ \infty} \frac{u_i \circ f_{i-1} \circ \cdots \circ f_0}{d^i} \right\rangle .$$

car les $u_i$ sont négatives.

Enfin, ce dernier terme est plus petit que

$$3C(\psi) \left\| \frac{F_n^* \omega^{l-1}}{d^{(l-1)(n+1)}} \right\| \sum_{i=n+1}^{+ \infty} \frac{\| u_i \|_{\infty} }{d^i} \leq 3C(\psi) \sum_{i=n+1}^{+ \infty}  \frac{e^{i \epsilon p}} {d^i}   $$

pour $0 < \epsilon < \frac{\log d}{p}$ fixé et $n \geq n_0$ avec le $n_0$ du lemme précédent.

Cela implique bien que $\frac{F_n^* \omega^{l-1}}{d^{(l-1)(n+1)}} \wedge \left( \frac{F_n^* \omega}{d^{n+1}} -T(f_0) \right)$ converge vers $0$ au sens des courants quand $n$ tend vers l'infini.

\end{proof}

\subsection{\bf{Propriétés d'invariance}}

Dans ce paragraphe, nous montrons que les courants de Green aléatoires possèdent des propriétés d'invariance. 

On a montré que pour $f_0 \in A$, on pouvait construire un courant de Green $T(f_0)$. Comme $F(A) \subset A$, si $f_0 \in A$ on a l'existence de courants de Green $T(f_i)$ (avec $i \geq 0$) où $f_i=F^i(f_0)$.

Ces courants vérifient

\begin{proposition}

Soit $f_0 \in A$. Alors, 

$$\frac{f_i^* T(f_{i+1})^l}{d^l}=T(f_i)^l.$$

En particulier, on a  

$$\frac{f_i^* \mu (f_{i+1})}{d^k}=\mu(f_i).$$

\end{proposition}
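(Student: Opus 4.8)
L'id\'ee est de traiter d'abord le cas $l=1$ au niveau des potentiels, puis d'en d\'eduire le cas g\'en\'eral en faisant commuter l'image r\'eciproque $f_j^*$ avec l'auto-intersection. Comme $F(A) \subset A$, chaque $f_j=F^j(f_0)$ est dans $A$ et, par la construction du paragraphe \ref{courant}, le courant $T(f_j)$ s'\'ecrit $T(f_j)=\omega+dd^c g^{(j)}$ avec
$$g^{(j)} = \sum_{i=0}^{+\infty} \frac{u_{j+i} \circ f_{j+i-1} \circ \cdots \circ f_j}{d^i},$$
o\`u $u_m$ d\'esigne le potentiel de la proposition \ref{prop2} v\'erifiant $\frac{f_m^* \omega}{d}=\omega+dd^c u_m$ (on retrouve $g^{(0)}=g$). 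Le point-cl\'e est la relation
$$g^{(j)} = u_j + \frac{g^{(j+1)} \circ f_j}{d},$$
que j'obtiendrais en composant avec $f_j$ la somme d\'efinissant $g^{(j+1)}$ puis en divisant par $d$: le d\'ecalage d'indice $m=i+1$ fait appara\^itre exactement les termes de rang $\geq 1$ de $g^{(j)}$, le terme $u_j$ \'etant celui de rang $0$.

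Comme $f_0 \in A$, l'application $f_j$ est un endomorphisme holomorphe et les $g^{(j)}$ sont continus, de sorte que $f_j^*$ a un sens sur ces courants et que $dd^c(g^{(j+1)} \circ f_j)=f_j^*(dd^c g^{(j+1)})$. En appliquant $f_j^*$ \`a $T(f_{j+1})$ et en utilisant $f_j^*\omega=d(\omega+dd^c u_j)$ puis la relation ci-dessus, on trouve
$$f_j^* T(f_{j+1}) = d\left(\omega + dd^c u_j\right) + dd^c\left(g^{(j+1)} \circ f_j\right) = d\left(\omega + dd^c g^{(j)}\right) = d\, T(f_j),$$
ce qui \'etablit le cas $l=1$, \`a savoir $\frac{f_j^* T(f_{j+1})}{d}=T(f_j)$.

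Pour $l$ quelconque, $T(f_{j+1})$ \'etant \`a potentiel continu, son image r\'eciproque par l'endomorphisme holomorphe $f_j$ reste un courant positif ferm\'e \`a potentiel continu, et l'image r\'eciproque commute avec le produit ext\'erieur au sens de Bedford--Taylor; on a donc
$$f_j^*\bigl(T(f_{j+1})^l\bigr) = \bigl(f_j^* T(f_{j+1})\bigr)^l = \bigl(d\, T(f_j)\bigr)^l = d^l\, T(f_j)^l,$$
d'o\`u $\frac{f_j^* T(f_{j+1})^l}{d^l}=T(f_j)^l$, et le cas $l=k$ donne $\frac{f_j^* \mu(f_{j+1})}{d^k}=\mu(f_j)$. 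Le principal obstacle est pr\'ecis\'ement la justification de $f_j^*(T(f_{j+1})^l)=(f_j^* T(f_{j+1}))^l$: il faut v\'erifier que l'image r\'eciproque pr\'eserve la continuit\'e du potentiel et commute avec l'op\'erateur de Monge--Amp\`ere. Une variante \'evitant d'invoquer ce fait g\'en\'eral consiste \`a proc\'eder par r\'ecurrence sur $l$ en s'appuyant directement sur la proposition \ref{prop9}: en posant $F_n^{(j)}:=f_{j+n}\circ\cdots\circ f_j$, on a $F_n^{(j+1)}\circ f_j=F_{n+1}^{(j)}$, donc $f_j^*\bigl((F_n^{(j+1)})^*\omega^l\bigr)=(F_{n+1}^{(j)})^*\omega^l$, et il resterait \`a passer \`a la limite en contr\^olant les potentiels comme dans la preuve de la proposition \ref{prop9}.
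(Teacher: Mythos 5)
Votre proposition est correcte, mais la voie principale que vous suivez diff\`ere de celle du texte. Le texte d\'eduit la proposition directement de la proposition \ref{prop9}: on applique $f_i^*$ \`a la suite de formes lisses $\frac{F_{i+1,n}^*\omega^l}{d^{l(n+1)}}$ (o\`u $F_{i+1,n}=f_{i+1+n}\circ\cdots\circ f_{i+1}$), qui vaut exactement $d^l\,\frac{F_{i,n+1}^*\omega^l}{d^{l(n+2)}}$ et converge donc vers $d^l\,T(f_i)^l$; d'autre part, la continuit\'e de l'image r\'eciproque des courants positifs ferm\'es par une application holomorphe, \'etablie dans \cite{DS3}, donne la convergence vers $f_i^*(T(f_{i+1})^l)$. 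C'est pr\'ecis\'ement la variante que vous esquissez en fin de proposition; le seul point que vous y laissez ouvert (le passage \`a la limite) est r\'egl\'e dans le texte par la r\'ef\'erence \`a \cite{DS3} plut\^ot que par un contr\^ole direct des potentiels. Votre voie principale est, elle, r\'eellement diff\'erente et \'egalement valable: la relation $g^{(j)}=u_j+\frac{g^{(j+1)}\circ f_j}{d}$ est exacte et fournit une preuve explicite et \'el\'egante du cas $l=1$ au niveau des potentiels; pour $l\geq 2$ vous reposez sur la commutation $f_j^*(S^l)=(f_j^*S)^l$ pour un courant $S$ \`a potentiel continu, fait vrai mais non trivial (il faut comparer le produit de Bedford--Taylor avec l'image r\'eciproque au sens de \cite{DS3} en dehors des valeurs critiques, puis utiliser que ces courants ne chargent pas les ensembles analytiques), et que vous identifiez honn\^etement comme l'obstacle principal. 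L'int\'er\^et de l'argument du texte est justement d'\'eviter ce lemme de commutation: il n'utilise que la convergence des approximants lisses et la continuit\'e de $f_i^*$ pour la topologie faible. Notez enfin que la d\'efinition m\^eme de $f_i^*$ sur les courants positifs ferm\'es de bidegr\'e $(l,l)$ provient de \cite{DS3}, de sorte que cette r\'ef\'erence ne peut \^etre compl\`etement \'evit\'ee dans aucune des deux approches.
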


\begin{proof}

Si on note $F_{i+1,n}=f_{i+1+n} \circ \cdots \circ f_{i+1}$, on a prouvé que $\frac{F_{i+1,n}^* \omega^l}{d^{l(n+1)}}$ converge vers $T(f_{i+1})^l$ quand $n$ tend vers l'infini.

Comme tous les $f_i$ sont dans $A$, ils sont en particulier holomorphes et en utilisant \cite{DS3}, on a d'une part $f_i^* \left( \frac{F_{i+1,n}^* \omega^l}{d^{l(n+1)}} \right) $ qui tend vers $f_i^*(T(f_{i+1})^l)$ au sens des courants quand $n$ tend vers l'infini, d'autre part,

$$f_i^* \left( \frac{F_{i+1,n}^* \omega^l}{d^{l(n+1)}} \right)= \frac{F_{i,n+1}^* \omega^l}{d^{l(n+1+1)}} \times \frac{d^{l(n+1+1)}}{d^{l(n+1)}}=d^l \frac{F_{i,n+1}^* \omega^l}{d^{l(n+1+1)}}$$

converge vers $d^l T(f_i)^l$. Cela prouve la proposition.

\end{proof}

Comme on a $f_* f^* = d^k Id$ sur l'ensemble des courants positifs fermés, on a aussi

\begin{corollaire}  

Pour $l$ compris entre $1$ et $k$, $(f_i)_* T(f_i)^l= d^{k-l} T(f_{i+1})^l$.

En particulier $(f_i)_* \mu(f_i)=  \mu(f_{i+1})$.

\end{corollaire}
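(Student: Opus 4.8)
Le plan est de d\'eduire directement ce corollaire de la proposition pr\'ec\'edente, en poussant la relation d'invariance par $(f_i)_*$ et en utilisant l'identit\'e $f_* f^* = d^k \, \mathrm{Id}$ valable sur les courants positifs ferm\'es. Comme $f_0 \in A$ et $F(A) \subset A$, tous les $f_i$ sont des endomorphismes holomorphes, et $T(f_{i+1})^l$ est un courant positif ferm\'e bien d\'efini puisque $T(f_{i+1})$ est \`a potentiel continu; les op\'erations $f_i^*$ et $(f_i)_*$ ont donc un sens et sont compatibles.

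Concr\`etement, je commencerais par r\'e\'ecrire la proposition pr\'ec\'edente sous la forme $f_i^* T(f_{i+1})^l = d^l \, T(f_i)^l$, puis j'appliquerais $(f_i)_*$ aux deux membres. Le membre de gauche vaut $(f_i)_* f_i^* T(f_{i+1})^l = d^k \, T(f_{i+1})^l$ gr\^ace \`a l'identit\'e $f_* f^* = d^k \, \mathrm{Id}$ appliqu\'ee au courant positif ferm\'e $T(f_{i+1})^l$, tandis que le membre de droite vaut $d^l \, (f_i)_* T(f_i)^l$. En identifiant les deux, on obtient $d^k \, T(f_{i+1})^l = d^l \, (f_i)_* T(f_i)^l$, soit, apr\`es division par $d^l$,
\[
(f_i)_* T(f_i)^l = d^{k-l} \, T(f_{i+1})^l.
\]
Le cas particulier $l = k$ donne alors $(f_i)_* \mu(f_i) = d^{0} \, \mu(f_{i+1}) = \mu(f_{i+1})$, ce qui est la seconde assertion.

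La seule v\'erification un peu d\'elicate, et c'est \`a peine un obstacle, est de s'assurer que l'identit\'e $f_* f^* = d^k \, \mathrm{Id}$ s'applique l\'egitimement \`a $T(f_{i+1})^l$. Cela repose sur le fait que $T(f_{i+1})^l$ est bien un courant positif ferm\'e: la continuit\'e du potentiel de $T(f_{i+1}) = \omega + dd^c g$ permet de d\'efinir ses puissances d'auto-intersection comme courants positifs ferm\'es, de sorte que la formule de projection et l'identit\'e invoqu\'ee restent valables. Tout le reste n'est qu'un r\'earrangement des puissances de $d$.
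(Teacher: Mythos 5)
Votre argument est correct et co\"{\i}ncide exactement avec celui du papier : l'auteur d\'eduit lui aussi le corollaire en appliquant $(f_i)_*$ \`a la relation $f_i^* T(f_{i+1})^l = d^l\, T(f_i)^l$ de la proposition pr\'ec\'edente et en invoquant l'identit\'e $f_* f^* = d^k\, \mathrm{Id}$ sur les courants positifs ferm\'es. Vous avez simplement explicit\'e les d\'etails que le papier laisse implicites.
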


\section{\bf{Un théorème de convergence}}{\label{convergence}}

Dans ce paragraphe, nous allons perturber l'endomorphisme holomorphe de départ $f_0 \in A$. On va noter pour cela $f_{0,0}=f_0$ et on va considérer une suite $(f_{n,0})_{n \geq 1}$ de $A$ qui converge vers $f_{0,0}$ quand $n$ tend vers l'infini. Nous montrons le théorème suivant (le $p$ est celui de la proposition \ref{prop2})

\begin{theoreme}

Supposons que $F$ soit continue et que

$$\mbox{(H)} \mbox{  } \mbox{  }\mbox{  }\mbox{  } \sum_{i=0}^{+ \infty} \sup_{n \geq 0} \frac{dist(F^i(f_{n,0}), \mathcal{M})^{-p}}{d^i} < + \infty.$$

Alors, $T(f_{n,0})^l$ converge vers $T(f_0)^l$ quand $n$ tend vers l'infini.

\end{theoreme}

 Nous expliciterons ensuite une situation où l'on peut appliquer ce théorème.

Commençons par le cas $l=1$.

On pose $f_{n,i}= F^i (f_{n,0})$ pour $i,n \in \Nn$. L'hypothèse (H) et la proposition \ref{prop2} impliquent l'existence de fonctions $u_{n,i}$ avec

$$\frac{f_{n,i}^*}{d}= \omega + dd^c u_{n,i}$$

et 

$$ \sum_{i=0}^{+ \infty} \sup_{n \geq 0} \frac{ \| u_{n,i} \|_{\infty}}{d^i} < + \infty.$$

Par construction, nous avons

$$T(f_{n,0})= \omega + dd^c \left( \sum_{i=0}^{\infty} \frac{u_{n,i} \circ f_{n,i-1} \circ \cdots \circ f_{n,0}}{d^{i}} \right).$$
 
Si on note

$$g_n=\sum_{i=0}^{\infty} \frac{u_{n,i} \circ f_{n,i-1} \circ \cdots \circ f_{n,0}}{d^{i}} ,$$

on va montrer que $(g_n)_{n \geq 1}$ converge uniformément vers $g_0$.

Soit $\epsilon > 0$. Pour $m \geq 0$, on a 

$$\left\| \sum_{i=m}^{\infty} \frac{u_{n,i} \circ f_{n,i-1} \circ \cdots \circ f_{n,0}}{d^{i}} \right\|_{\infty} \leq \sum_{i=m}^{\infty} \frac{\|u_{n,i} \|_{\infty} }{d^{i}} \leq \sum_{i=m}^{\infty} \sup_{n \geq 0} \frac{\|u_{n,i} \|_{\infty} }{d^{i}}$$

et le terme de droite tend vers $0$ (indépendamment de $n$) quand $m$ tend vers l'infini par l'hypothèse (H). Soit donc $m_0$ tel que $\sum_{i=m_0}^{\infty} \sup_{n \geq 0} \frac{\|u_{n,i} \|_{\infty} }{d^{i}} \leq \epsilon$.

On a 

$$\|g_n - g_0 \|_{\infty} \leq \left\| \sum_{i=0}^{m_0-1} \frac{u_{n,i} \circ f_{n,i-1} \circ \cdots \circ f_{n,0}-u_{0,i} \circ f_{0,i-1} \circ \cdots \circ f_{0,0}}{d^{i}} \right\|_{\infty} + 2 \epsilon$$

pour tout $n \geq 0$.

Mais, l'hypothèse (H) implique que les $F^i(f_{0,0})=f_{0,i}$ sont holomorphes. Comme les $F^i$ sont continues au point $f_{0,0}$, on a $F^i(f_{n,0})=f_{n,i}$ qui converge vers $F^i(f_{0,0})=f_{0,i}$.

En utilisant le lemme suivant (qui découle juste du théorème des accroissements finis)

\begin{lemme}

Il existe une constante $C$ telle que pour tous $f_n , g_n , f, g \in \mathcal{H}_d$ on ait

$$dist(f_n \circ g_n , f \circ g) \leq dist(f_n, f) + C \| D_x f \|_{\infty} dist(g_n,g).$$

Ici, la distance entre deux fonctions $f$ et $g$ est donnée par $dist(f,g)= \max_{\Pp^k(\Cc)} dist(f(x),g(x))$.

\end{lemme}

on obtient par récurrence que

$$\max_{x \in \Pp^k(\Cc)} dist(f_{n,i-1} \circ \cdots \circ f_{n,0}(x), f_{0,i-1} \circ \cdots \circ f_{0,0}(x))$$

converge vers $0$ quand $n$ tend vers l'infini pour $i=0 , \cdots , m_0 - 1$.

Comme $f_{n,i}$ tend vers $f_{0,i}$ quand $n$ tend vers l'infini, la proposition \ref{prop4} implique que  pour $i=0 , \cdots , m_0 - 1$, on a  $\| u_{n,i} - u_{0,i} \|_{\infty} $ qui converge vers $0$ et en utilisant un analogue au lemme précédent (les fonctions $u_{0,i}$ sont $C^1$ par la proposition \ref{prop2}), on obtient que 

$$\left\| \sum_{i=0}^{m_0-1} \frac{u_{n,i} \circ f_{n,i-1} \circ \cdots \circ f_{n,0}-u_{0,i} \circ f_{0,i-1} \circ \cdots \circ f_{0,0}}{d^{i}} \right\|_{\infty}$$

tend vers $0$ quand $n$ tend vers l'infini.

La suite $(g_n)_{n \geq 1}$ converge donc uniformément vers $g_0$.

Cette convergence uniforme implique que $T(f_{n,0})= \omega + dd^c g_n$ converge au sens des courants vers $T(f_{0,0})= \omega + dd^c g_0$.

Pour $l\geq 1$, on fait une récurrence. Le cas $l=1$ vient d'être démontré. Supposons la propriété vraie au rang $l-1$, c'est-à dire que $T(f_{n,0})^{l-1}$ converge vers $T(f_{0,0})^{l-1}$.

Au rang $l$, si $\psi$ est une $(k-l,k-l)$ forme test, on a 

\begin{equation*}
\begin{split}
\langle T(f_{n,0})^{l}, \psi \rangle &= \langle \omega \wedge T(f_{n,0})^{l-1}, \psi \rangle + \langle dd^c g_n \wedge T(f_{n,0})^{l-1} , \psi \rangle\\
&=\langle T(f_{n,0})^{l-1}, \psi \wedge \omega \rangle + \langle g_n , T(f_{n,0})^{l-1} \wedge dd^c \psi \rangle.\\
\end{split}
\end{equation*}

Le premier terme converge vers $\langle T(f_{0,0})^{l-1}, \psi \wedge \omega \rangle$ par hypothèse de récurrence. Pour le second, on a 

\begin{equation*}
\begin{split}
& | \langle g_n , T(f_{n,0})^{l-1} \wedge dd^c \psi \rangle - \langle g_0 , T(f_{0,0})^{l-1} \wedge dd^c \psi \rangle |  \\
&= |\langle g_n -g_0, T(f_{n,0})^{l-1} \wedge dd^c \psi \rangle + \langle g_0 , T(f_{n,0})^{l-1} \wedge dd^c \psi - T(f_{0,0})^{l-1} \wedge dd^c \psi  \rangle |  \\
&\leq  |\langle g_n -g_0, T(f_{n,0})^{l-1} \wedge dd^c \psi \rangle| + | \langle g_0 , T(f_{n,0})^{l-1} \wedge dd^c \psi - T(f_{0,0})^{l-1} \wedge dd^c \psi  \rangle |=a_n + b_n.
\end{split}
\end{equation*}

Le terme $a_n$ tend vers $0$. En effet, on a $0 \leq a_n \leq C(\psi) \| g_n -g_0 \|_{\infty} \| T(f_{n,0})^{l-1} \|=C(\psi) \| g_n -g_0 \|_{\infty}$ en utilisant une majoration de $dd^c \psi$ comme dans la fin de la preuve de la proposition \ref{prop9} et on a démontré que $(g_n)_{n \geq 1}$ converge uniformément vers $g_0$ quand $n$ tend vers l'infini.

Le terme $b_n$ tend vers $0$ aussi car $T(f_{n,0})^{l-1} \wedge dd^c \psi$ converge vers $T(f_{0,0})^{l-1} \wedge dd^c \psi $ par hypothèse de récurrence et $g_0$ est continue. 

Finalement, $\langle T(f_{n,0})^{l}, \psi \rangle $ converge vers

$$\langle T(f_{0,0})^{l-1}, \psi \wedge \omega \rangle + \langle g_0 , T(f_{0,0})^{l-1} \wedge dd^c \psi \rangle |= \langle T(f_{0,0})^{l}, \psi \rangle.$$

Cela termine la récurrence. 

\bigskip

Donnons une application de ce théorème de convergence (voir aussi les propriétés de continuité dans \cite{FW}).

Soit $K$ un compact de $\Pp^N(\Cc)$ disjoint de $\mathcal{M}$ et $F$ un endomorphisme holomorphe de $\Pp^N(\Cc)$. Si les $f_{n,i}$ sont dans $K$ (pour $i,n \in \Nn$), on a $ dist(F^i(f_{n,0}), \mathcal{M}) \geq dist(K, \mathcal{M}) \geq \epsilon_0 > 0$ pour tout $n$ et $i$ et l'hypothèse (H) est vérifiée. Ce cas se produit par exemple si $f_{0,0} \in \mathcal{H}_d$ est un point fixe attractif de $F$. En effet, soit $K= \overline{B(f_{0,0}, \epsilon)}$ avec $\epsilon>0$ suffisamment petit pour que $K$ soit dans le bassin d'attraction de $f_{0,0}$, $F(K) \subset K$ et $K$ soit disjoint de $\mathcal{M}$. On part de $f \in K$ et on note $f_{n,0}=F^n(f)$ pour $n \geq 1$. Alors $(f_{n,0})$ converge vers $f_{0,0}$ et tous les $f_{n,i}=F^{i}(f_{n,0})$ sont dans $K$. On en déduit que $T(f_{n,0})^l$ converge vers $T(f_{0,0})^l$.

\section{\bf{Mélange aléatoire et théorème de récurrence}}

Dans ce paragraphe, nous montrons un théorème de mélange aléatoire ensuite nous l'utilisons pour prouver un résultat de récurrence sur les mesures $\mu(f_i)$ construites précédemment.

\subsection{\bf{Mélange aléatoire}}

\begin{theoreme}{\label{melange}}

On considère une suite $(f_n)_{n \in \Nn}$ d'endomorphismes holomorphes de degrés $d \geq 2$ et une suite de probabilités $(\mu(f_n))_{n \in \Nn}$ telle que pour tout $n \in \Nn$ on ait $f_n^*(\mu(f_{n+1}))=d^k \mu(f_{n})$ et $\mu(f_n)=(\omega + dd^c g_n)^k$ avec $g_n$ des fonctions continues.

Alors pour $\varphi \in L^{\infty}(\Pp^k)$ et $\psi \in DSH(\Pp^k)$, on a

$$| \langle \mu(f_0), (f_{n-1} \circ \cdots \circ f_0)^* \varphi \psi \rangle - \langle \mu(f_n) , \varphi \rangle \langle \mu(f_0), \psi \rangle | \leq C d^{-n} (1+ \| g_n \|_{\infty}  )^2 \|  \varphi \|_{\infty} \| \psi \|_{\mbox{DSH}}.$$

Ici $C$ est une constante qui ne dépend que de $\Pp^k$.

\end{theoreme}

Lorsque tous les $f_n$ sont égaux, ce résultat est le théorème de mélange classique (voir \cite{FS1} et \cite{DNS}). 

La démonstration du théorème va suivre la méthode utilisée par Dinh, Nguyen et Sibony dans \cite{DNS} pour démontrer un théorème de mélange pour les endomorphismes holomorphes de $\Pp^k(\Cc)$. Commençons par démontrer quelques lemmes qui nous serviront ensuite.

\begin{lemme}{\label{lemme1}}

Pour $n \in \Nn$, notons $\mbox{DSH}_{0, \mu(f_n)}= \{ \varphi \mbox{ DSH, } \langle \varphi, \mu(f_n) \rangle=0 \}$. Sous les mêmes hypothèses que dans le théorème précédent, on a pour tout $n \in \Nn$

$$\mbox{DSH}_{0, \mu(f_n)} \xrightarrow{(f_n)_*} \mbox{DSH}_{0, \mu(f_{n+1})}.$$

\end{lemme}

\begin{proof}

Soit $\varphi \in \mbox{DSH}_{0, \mu(f_n)}$. On a

$$ \langle (f_n)_* \varphi , \mu(f_{n+1}) \rangle= \langle \varphi , f_n^* \mu(f_{n+1}) \rangle= d^k \langle \varphi , \mu(f_n) \rangle=0.$$

\end{proof}

\begin{lemme}{\label{lemme2}}

Pour tout $\varphi \in \mbox{DSH}_{0, \mu(f_n)}$ et tout $n \in \Nn$, on a

$$ \| (f_n)_* \varphi \|_{\mbox{DSH}}^{\mu(f_{n+1})} \leq d^{k-1} \| \varphi \|_{\mbox{DSH}}^{\mu(f_n)}.$$

\end{lemme}

\begin{proof}

On a 

$$ \| (f_n)_* \varphi \|_{\mbox{DSH}}^{\mu(f_{n+1})}= | \langle (f_n)_* \varphi, \mu(f_{n+1}) \rangle | + \inf \| S^{\pm} \|$$

où $S^{\pm}$ sont des $(1,1)$ courants positifs fermés qui vérifient $dd^c (f_n)_* \varphi=S^+ - S^-$.

Mais si $dd^c \varphi = R^+ - R^-$, on a $dd^c (f_n)_* \varphi= (f_n)_* R^+ - (f_n)_* R^-$. En particulier,

$$\| (f_n)_* \varphi \|_{\mbox{DSH}}^{\mu(f_{n+1})} \leq | \langle (f_n)_* \varphi, \mu(f_{n+1}) \rangle | + \inf \| (f_n)_* R^{\pm} \|$$

avec $R^{\pm}$ des $(1,1)$ courants positifs fermés tels que $dd^c \varphi = R^+ - R^-$.

Comme les $f_n$ sont des endomorphismes holomorphes de degré $d$ de $\Pp^k(\Cc)$, on a $\| (f_n)_* R^{\pm} \| = d^{k-1} \| R^{\pm} \|$ et le lemme provient du fait que $| \langle (f_n)_* \varphi, \mu(f_{n+1}) \rangle |=0$ par le lemme \ref{lemme1}.

\end{proof}

Démontrons maintenant le théorème.

On considère $\varphi \in L^{\infty}(\Pp^k)$ et $\psi \in DSH(\Pp^k)$ et on pose $\psi_0= \psi - \langle \mu(f_0), \psi \rangle$. On a donc $\langle  \mu(f_0) , \psi_0 \rangle =0$.

On va calculer $| \langle \mu(f_0), (f_{n-1} \circ  \cdots \circ f_0)^* \varphi \psi_0 \rangle |$ de deux façons différentes.

Tout d'abord,

\begin{equation*}
\begin{split}
| \langle \mu(f_0), (f_{n-1} \circ  \cdots \circ f_0)^* \varphi \psi_0 \rangle | &= | \langle \mu(f_0), (f_{n-1} \circ  \cdots \circ f_0)^* \varphi \psi \rangle  -  \langle \mu(f_0), (f_{n-1} \circ  \cdots \circ f_0)^* \varphi \langle \mu(f_0) , \psi \rangle \rangle | \\
&=| \langle \mu(f_0), (f_{n-1} \circ  \cdots \circ f_0)^* \varphi \psi \rangle -  \langle (f_{n-1} \circ  \cdots \circ f_0)_* \mu(f_0), \varphi  \rangle \langle  \mu(f_0) , \psi \rangle| \\
&=| \langle \mu(f_0), (f_{n-1} \circ \cdots \circ f_0)^* \varphi \psi \rangle - \langle \mu(f_n) , \varphi \rangle \langle \mu(f_0), \psi \rangle | \\
\end{split}
\end{equation*}
car l'hypothèse du théorème implique que $(f_n)_* \mu(f_n)= \mu(f_{n+1})$. On a obtenu ainsi la quantité que l'on souhaite majorer.

Par ailleurs, si on note $\Lambda_i:= \frac{(f_i)_*}{d^k}$, on a

\begin{equation*}
\begin{split}
| \langle \mu(f_0), (f_{n-1} \circ  \cdots \circ f_0)^* \varphi \psi_0 \rangle | &= \left| \left\langle \frac{(f_{n-1} \circ  \cdots \circ f_0)^*\mu(f_n)}{d^{kn}}, (f_{n-1} \circ  \cdots \circ f_0)^* \varphi \psi_0 \right\rangle \right| \\
&= | \langle \mu(f_n), \varphi \Lambda_{n-1} \cdots \Lambda_0 \psi_0 \rangle | \leq  \langle \mu(f_n), | \varphi | | \Lambda_{n-1} \cdots \Lambda_0 \psi_0 | \rangle  \\
& \leq \| \varphi \|_{\infty}  \langle \mu(f_n), | \Lambda_{n-1} \cdots \Lambda_0 \psi_0 | \rangle  \leq \| \varphi \|_{\infty} \|  |\Lambda_{n-1} \cdots \Lambda_0 \psi_0 | \|_{\mbox{DSH}}^{\mu(f_n)}\\
& \leq C' \| \varphi \|_{\infty} (1 + \|g_n\|_{\infty} ) \| |\Lambda_{n-1} \cdots \Lambda_0 \psi_0 | \|_{\mbox{DSH}}
\end{split}
\end{equation*}

la dernière inégalité venant de la proposition \ref{estimation1}. 

Mais, il existe une contante $C$ telle que pour toute fonction DSH $\psi$ on ait $\| | \psi | \|_{\mbox{DSH}} \leq C \| \psi \|_{ \mbox{DSH}}$ (voir \cite{DNS}), d'où en utilisant aussi la proposition \ref{estimation2}

\begin{equation*}
\begin{split}
| \langle \mu(f_0), (f_{n-1} \circ  \cdots \circ f_0)^* \varphi \psi_0 \rangle | \leq CC' \| \varphi \|_{\infty} (1 + \|g_n\|_{\infty} ) \| \Lambda_{n-1} \cdots \Lambda_0 \psi_0  \|_{\mbox{DSH}}\\
\leq C'' \| \varphi \|_{\infty} (1 + \|g_n\|_{\infty} )^2 \| \Lambda_{n-1} \cdots \Lambda_0 \psi_0  \|_{\mbox{DSH}}^{\mu(f_n)}.\\
\end{split}
\end{equation*}

Enfin, comme $\psi_0 \in DSH_{0, \mu(f_0)}$, en utilisant le lemme \ref{lemme1}, on a $\Lambda_{n-1} \cdots \Lambda_0 \psi_0 \in DSH_{0, \mu(f_n)}$ et le lemme \ref{lemme2} implique alors

\begin{equation*}
\begin{split}
| \langle \mu(f_0), (f_{n-1} \circ  \cdots \circ f_0)^* \varphi \psi_0 \rangle | & \leq C'' d^{-n} \| \varphi \|_{\infty} (1 + \|g_n\|_{\infty} )^2 \|  \psi_0 \|_{\mbox{DSH}}^{\mu(f_0)}\\
& \leq C'' d^{-n} \| \varphi \|_{\infty} (1 + \|g_n\|_{\infty} )^2 \|  \psi \|_{\mbox{DSH}}.\\
\end{split}
\end{equation*}

Cela démontre le théorème.

\subsection{\bf{Applications, propriétés de récurrence}}

Pour pouvoir appliquer le théorème précédent, il faut contrôler $(1 + \|g_n\|_{\infty} )^2$. On va donner ici une situation générale où c'est le cas et ensuite nous donnerons un exemple où l'on peut l'appliquer.

Considérons toujours une application mesurable $F$ de $\Pp^N(\Cc)$ dans $\Pp^N(\Cc)$ et $\Lambda$ une mesure ergodique et invariante par $F$. On suppose que $\int \log dist(f , \mathcal{M}) d \Lambda(f) > - \infty$ et on note encore $A$ l'ensemble des bons points pour le théorème de Birkhoff associés à la mesure $\Lambda$ et la fonction intégrable $\log dist(f , \mathcal{M})$.

Dans le paragraphe \ref{courant}, nous avons montré que pour $f_0 \in A$, on a $f_n:=F^n(f_0) \in A$ pour tout $n$ et on peut donc construire les mesures de Green associées à chaque $f_n$. Comme $F^{i}(f_n)=f_{n+i}$, on peut écrire ces mesures sous la forme suivante: 

$$\mu(f_n)=\left( \omega + dd^c  \left( \sum_{i=0}^{+ \infty} \frac{u_{n+i} \circ f_{n+i-1} \circ \cdots \circ f_{n}}{d^{i}} \right) \right)^k=:(\omega + dd^c g_n)^k.$$

Ici les $g_n$ sont des fonctions continues et on a vu que $f_n^*(\mu(f_{n+1}))=d^k \mu(f_{n})$. 

On peut donc appliquer le théorème de mélange à la suite $(f_n)_{n \in \Nn}$ et on a pour $\varphi \in L^{\infty}(\Pp^k)$ et $\psi \in DSH(\Pp^k)$:

$$| \langle \mu(f_0), (f_{n-1} \circ \cdots \circ f_0)^* \varphi \psi \rangle - \langle \mu(f_n) , \varphi \rangle \langle \mu(f_0), \psi \rangle | \leq C d^{-n} (1+ \| g_n \|_{\infty}  )^2 \|  \varphi \|_{\infty} \| \psi \|_{\mbox{DSH}}.$$

Maintenant, on peut contrôler $(1 + \|g_n\|_{\infty} )^2$ car on a

\begin{lemme}

Soit $\epsilon >0$. Il existe $n_0$ tel que pour $n \geq n_0$ on ait $\| g_n \|_{\infty} \leq e^{\epsilon n}$.

\end{lemme}

\begin{proof}

Comme $f_0 \in A$, on a pour $\alpha >0$ (que l'on prend tel que $\alpha p < \min(\epsilon, \log d)$) l'existence par le lemme \ref{lemme} et la proposition \ref{prop2} d'un $n_0$ avec $\| u_i \|_{\infty} \leq C e^{i \alpha p}$ pour $i \geq n_0$.

On a donc pour $n \geq n_0$,

$$\| g_n \|_{\infty} \leq \sum_{i=0}^{+ \infty} \frac{ \| u_{n+i} \|_{\infty}}{d^{i}} \leq C \sum_{i=0}^{\infty} \frac{e^{(i+n) \alpha p}}{d^{i}}=C \left( \sum_{i=0}^{\infty} \frac{e^{i\alpha p}}{d^{i}} \right) e^{n \alpha p}.$$

Comme $\alpha p < \log d$, $C \left( \sum_{i=0}^{\infty} \frac{e^{i\alpha p}}{d^{i}} \right)$ est une constante et on a alors,

$$\| g_n \|_{\infty} \leq e^{\epsilon n}$$

si $n$ est assez grand. Cela démontre la proposition.

\end{proof}

On obtient alors

\begin{corollaire}

On se place sous les hypothèses précédentes.

Soient $\varphi$ une fonction continue et $\psi$ une fonction DSH.

S'il existe une sous-suite avec $\mu(f_{\alpha(n)})$ qui converge vers $\mu(f_0)$, on a 

$$\langle \mu(f_0), (f_{\alpha(n)-1} \circ \cdots \circ f_0)^* \varphi \psi \rangle \rightarrow \langle \mu(f_0) , \varphi \rangle \langle \mu(f_0), \psi \rangle.$$

\end{corollaire}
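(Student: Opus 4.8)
The plan is to apply the mixing theorem (Th\'eor\`eme \ref{melange}) directly to the sequence $(f_n)_{n \in \Nn}$ with the running index replaced by $\alpha(n)$, and then to control each term on the right-hand side. Since $\varphi$ is continuous it lies in $L^{\infty}(\Pp^k)$ and $\psi$ is DSH, so the theorem applies and yields
$$| \langle \mu(f_0), (f_{\alpha(n)-1} \circ \cdots \circ f_0)^* \varphi \psi \rangle - \langle \mu(f_{\alpha(n)}) , \varphi \rangle \langle \mu(f_0), \psi \rangle | \leq C d^{-\alpha(n)} (1+ \| g_{\alpha(n)} \|_{\infty}  )^2 \|  \varphi \|_{\infty} \| \psi \|_{\mbox{DSH}}.$$
It then suffices to prove two things: that the right-hand side tends to $0$, and that $\langle \mu(f_{\alpha(n)}) , \varphi \rangle \langle \mu(f_0), \psi \rangle$ converges to $\langle \mu(f_0) , \varphi \rangle \langle \mu(f_0), \psi \rangle$.

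For the first point, I would fix $\epsilon > 0$ with $2 \epsilon < \log d$ and invoke the preceding lemma, which gives an $n_0$ such that $\| g_n \|_{\infty} \leq e^{\epsilon n}$ for $n \geq n_0$. As $\alpha$ is a subsequence, $\alpha(n) \to + \infty$, so for $n$ large one has $\alpha(n) \geq n_0$ and hence $(1 + \| g_{\alpha(n)} \|_{\infty})^2 \leq 4 e^{2 \epsilon \alpha(n)}$. The right-hand side is then bounded by $4 C \| \varphi \|_{\infty} \| \psi \|_{\mbox{DSH}} \, (e^{2\epsilon}/d)^{\alpha(n)}$, and the choice $2\epsilon < \log d$ forces $e^{2\epsilon}/d < 1$, so this bound goes to $0$ as $n \to \infty$.

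For the second point, the hypothesis is exactly that $\mu(f_{\alpha(n)})$ converges weakly (as a measure) to $\mu(f_0)$. Since $\varphi$ is continuous, this gives $\langle \mu(f_{\alpha(n)}) , \varphi \rangle \to \langle \mu(f_0) , \varphi \rangle$, and multiplying by the fixed scalar $\langle \mu(f_0), \psi \rangle$ (well defined, because $\mu(f_0)$ has continuous potential and so does not charge pluripolar sets) yields the desired convergence. Combining the two points through the triangle inequality then produces the stated limit.

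The only genuinely delicate feature is the competition, inside the error term, between the exponential growth $(1 + \| g_{\alpha(n)}\|_\infty)^2 \approx e^{2\epsilon \alpha(n)}$ coming from the potentials and the decay $d^{-\alpha(n)}$ supplied by mixing; this is precisely what forces the lemma on $\| g_n \|_{\infty}$ to be used with $\epsilon$ chosen small relative to $\log d$. It is also why continuity of $\varphi$ (rather than mere boundedness, as in Th\'eor\`eme \ref{melange}) is required here, since only continuity lets one pass to the limit in $\langle \mu(f_{\alpha(n)}), \varphi \rangle$ under weak convergence of the measures.
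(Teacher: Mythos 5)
Your proof is correct and follows essentially the same route as the paper: the author likewise splits the quantity into the mixing-theorem error term $a_n$ (controlled by $C d^{-\alpha(n)}(1+e^{\epsilon\alpha(n)})^2\|\varphi\|_{\infty}\|\psi\|_{\mbox{DSH}}$ with $\epsilon<\frac{\log d}{2}$ from the preceding lemma) and the term $b_n$ handled by weak convergence of $\mu(f_{\alpha(n)})$ against the continuous function $\varphi$. Your closing remarks on the competition between $e^{2\epsilon\alpha(n)}$ and $d^{-\alpha(n)}$ and on the role of continuity of $\varphi$ are accurate.
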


\begin{proof}

Soit $\epsilon >0$ avec $\epsilon < \frac{\log d}{2}$. Par le lemme précédent, on a l'existence de $n_0$ avec $\| g_n \|_{\infty} \leq e^{\epsilon n}$ pour $n \geq n_0$. Maintenant,

$$|\langle \mu(f_0), (f_{\alpha(n)-1} \circ \cdots \circ f_0)^* \varphi \psi \rangle - \langle \mu(f_0) , \varphi \rangle \langle \mu(f_0), \psi \rangle| \leq a_n + b_n$$

avec 

$$a_n= |\langle \mu(f_0), (f_{\alpha(n)-1} \circ \cdots \circ f_0)^* \varphi \psi \rangle - \langle \mu(f_{\alpha(n)}) , \varphi \rangle \langle \mu(f_0), \psi \rangle|$$

et 

$$b_n= | \langle \mu(f_{\alpha(n)}) , \varphi \rangle \langle \mu(f_0), \psi \rangle - \langle \mu(f_0) , \varphi \rangle \langle \mu(f_0), \psi \rangle| .$$

La suite $(b_n)$ tend vers $0$ car $\mu(f_{\alpha(n)})$ converge vers $\mu(f_0)$ et $\varphi$ est continue.

Pour $(a_n)$, on a 

$$a_n \leq Cd^{-\alpha(n)}(1+ e^{\epsilon \alpha(n)})^2  \| \varphi \|_{\infty} \| \psi \|_{\mbox{DSH}}$$

ce qui implique que $(a_n)$ tend vers $0$ aussi.

\end{proof}

Donnons maintenant une situation où l'on peut appliquer ce qui précède.

Considérons $F$ une application continue de $\Pp^N(\Cc)$ dans  $\Pp^N(\Cc)$ et $\Lambda$ une mesure ergodique et invariante par $F$. On suppose de plus que le support de $\Lambda$ est disjoint de $\mathcal{M}$.

L'ensemble des points récurrents (i.e. les $f_0$ tels qu'il existe une sous-suite avec $F^{\alpha(n)}(f_0)$ qui converge vers $f_0$) est de mesure pleine pour $\Lambda$. On notera $R$ l'intersection de cet ensemble avec $A$ et le support de $\Lambda$. Si $f_0 \in R$, on pose $f_{0,0}=f_0$ et $f_{n,0}=F^{\alpha(n)}(f_0)$ où la sous-suite est prise de sorte que $(f_{n,0})$ converge vers $f_0$.

Comme les $F^{i}(f_{n,0})$ sont dans le support de $\Lambda$ qui est un compact disjoint de $\mathcal{M}$, l'hypothèse (H) du paragraphe \ref{convergence} est vérifiée. En particulier, on a $\mu(f_{n,0})$ qui converge vers $\mu(f_{0,0})$, c'est-à-dire $\mu(f_{\alpha(n)}) \rightarrow \mu(f_0)$. On peut donc appliquer le corollaire précédent. 

Nous verrons dans le deuxième article des théorèmes de récurrence beaucoup plus forts.

\newpage

\bigskip

\bigskip\noindent
Henry De Thélin, Laboratoire Analyse, Géométrie et Applications, UMR 7539,
Institut Galilée, Université Paris 13, 99 Avenue J.-B. Clément, 93430 Villetaneuse, France.\\
 {\tt dethelin@math.univ-paris13.fr}

\end{document}